\numberwithin{equation}{section}
\numberwithin{figure}{section}
\theoremstyle{plain}
\newtheorem{thm}{\protect\theoremname}
\theoremstyle{definition}
\newtheorem{defn}[thm]{\protect\definitionname}
\theoremstyle{plain}
\newtheorem{prop}[thm]{\protect\propositionname}
\theoremstyle{plain}
\newtheorem{fact}[thm]{\protect\factname}
\theoremstyle{plain}
\newtheorem{cor}[thm]{\protect\corollaryname}
\theoremstyle{definition}
\newtheorem{problem}[thm]{\protect\problemname}
\theoremstyle{remark}
\newtheorem{rem}[thm]{\protect\remarkname}
\theoremstyle{plain}
\newtheorem*{thm*}{\protect\theoremname}
\theoremstyle{plain}
\newtheorem{lem}[thm]{\protect\lemmaname}
\providecommand{\corollaryname}{Corollary}
\providecommand{\definitionname}{Definition}
\providecommand{\factname}{Fact}
\providecommand{\lemmaname}{Lemma}
\providecommand{\problemname}{Problem}
\providecommand{\propositionname}{Proposition}
\providecommand{\remarkname}{Remark}
\providecommand{\theoremname}{Theorem}
\begin{document}
\title[Computability of Finite Quotients]{Computability of Finite Quotients of Finitely Generated Groups}
\begin{abstract}
We study systematically groups whose marked finite quotients form
a recursive set. We give several definitions, and prove basic properties
of this class of groups, and in particular emphasize the link between
the growth of the depth function and solvability of the word problem.
We give examples of infinitely presented groups whose finite quotients
can be effectively enumerated. Finally, our main result is that a
residually finite group can be even not recursively presented and
still have computable finite quotients, and that, on the other hand,
it can have solvable word problem while still not having computable
finite quotients. 
\end{abstract}

\author{Emmanuel Rauzy}
\curraddr{Université de Paris. UFR de Mathématiques. Bâtiment Sophie Germain.
8 place Aurélie Nemours, 75013 Paris, France}
\email{emmanuel.rauzy.14@normalesup.org}
\maketitle

\section*{Introduction }

The fact that several well known conjectures which have been solved
for countable groups remain open for finitely presented groups, such
as the Burnside problem, or the existence of groups of intermediate
growth, shows that little is known about the specificities of finitely
presented groups. 

One of the most striking results that affects specifically finitely
presented groups is McKinsey's theorem : finitely presented residually
finite groups must have solvable word problem. (McKinsey's theorem,
which appeared in \cite{McKinsey1943}, is in fact set in a more general
setting than that of finitely presented residually finite groups,
we are only interested in its group theoretical version, which was
first made explicit by Mal'cev in \cite{Malcev1958}, followed by
Dyson in \cite{Dyson64} and by Mostoswski in \cite{Mostowski1966}.)
The proof of this theorem hinges on the fact that the finite quotients
of a finitely presented group can be enumerated. However, it is known
that recursively presented residually finite groups can have unsolvable
word problem, and thus quotients that cannot be enumerated: two examples
of these exist in the literature, one by Meskin (\cite{Meskin1974}),
which is in addition center-by-metabelian, and one by Dyson in \cite{Dyson1974}.
This proves in particular that there can be no Higman theorem for
general residually finite groups: not all recursively presented residually
finite groups embed in finitely presented residually finite groups. 

Before stating a precise definition of ``having computable finite
quotients'', let us recall the proof of McKinsey's theorem. 

Consider a finitely presented residually finite group $G$, with a
generating family $S$ of cardinal $n$, and $w$, a word whose letters
are elements of $S\cup S^{-1}$. We try to determine whether $w=e$
in $G$. 

First, as in any recursively presented group, we can apply to $w$
an algorithm that will stop if it is the identity element of the group,
and that never stops otherwise. This is done by enumerating relations,
and their conjugates, and the products of their conjugates, and checking
every time whether the word $w$ has appeared. 

Secondly, notice that if $F$ is a finite group, one can determine
in a finite number of steps whether $F$ is a quotient of $G$: this
is done by checking, for every generating family of $F$ of cardinal
$n$, whether the (finitely many) defining relations of $G$ hold
between those generators. Thus, from an enumeration of all finite
groups by their Cayley table, one can obtain an enumeration of all
finite quotients of $G$. In each of those quotients, we can check
whether or not the image of $w$ is trivial. If a quotient is found,
in which the image of $w$ is different from the identity, we can
conclude that in $G$ as well $w$ must be different from the identity,
and stop that procedure. By definition of a residually finite group,
any non-trivial element of $G$ will have a non-trivial image in a
finite quotient, thus that second part of the algorithm will always
stop if $w$ is not the identity element in $G$. 

This proof is the sum of three facts: 
\begin{enumerate}
\item In a recursively presented group, there is an algorithm that determines
when a word corresponds to the identity (and never stops otherwise). 
\item In a finitely presented group, there is an algorithm that determines
when a finite group is a quotient of it, and produces a morphism. 
\item In a residually finite group whose finite quotients can be enumerated,
there is an algorithm that determines when a word corresponds to a
non-identity element (and never stops otherwise). 
\end{enumerate}
The first point is an equivalence and is well known. The last point
is very natural, and the definition of ``residually finite group''
could have been introduced to answer the question: ``what is a sufficient
condition for a group whose finite quotients can be enumerated to
have solvable word problem?''. 

The second point is the one at the origin of this article, which consists
in a systematic study of groups whose finite quotients can be detected.
Although several articles (\cite{Bou-Rabee2016,Garrido2016,HARTUNG2011})
have already mentioned the fact that it is sometimes possible to enumerate
the finite quotients of non-finitely presented groups, the following
was never stated as a definition: 
\begin{defn}
A finitely generated group $G$, together with a generating set $S$,
is said to have Computable Finite Quotients (CFQ) if there is an algorithm
that, given a pair $(F,f)$, where $F$ is a finite group (given by
a finite presentation) and $f$ is a function from $S$ to $F$, determines
whether the function $f$ extends to a group morphism, that is whether
there exists a group homomorphism $\hat{f}:G\rightarrow F$ such that
for any $s$ in $S$, $f(s)=\hat{f}(s)$. 

If there exists an algorithm that terminates when the function $f$
extends to a group morphism, but does not terminate otherwise, we
say that $G$ has Recursively-enumerable Finite Quotients (ReFQ).

If there exists an algorithm that terminates when the function $f$
does not extend to a group morphism, but does not terminate otherwise,
we say that $G$ has co-Recursively-enumerable Finite Quotients (co-ReFQ). 
\end{defn}

Of course having CFQ is equivalent to having both ReFQ and co-ReFQ. 

It follows from the proof given earlier of McKinsey's algorithm that
all finitely presented groups have CFQ. This fact should be compared
to a result of Bridson and Wilton from \cite{Bridson2015}: there
is no algorithm that, given a finite presentation, decides whether
or not the group it defines admits a non-trivial finite quotient. 

This article builds upon Dyson's groups from \cite{Dyson1974} to
obtain the following two theorems:
\begin{thm}
\label{thm:Thm1}There exists a finitely generated residually finite
group with solvable word problem, but that has uncomputable finite
quotients. 
\begin{thm}
\label{thm:THM2}There exists a finitely generated residually finite
group with computable finite quotients, that still has unsolvable
word problem. 
\end{thm}

\end{thm}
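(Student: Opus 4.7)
The plan is to construct both groups as variants of Dyson's construction from \cite{Dyson1974}, which realizes finitely generated residually finite groups as subgroups of infinite direct products $\prod_{n \in \mathbb{N}} F_n$ of finite groups, with structural features of the group controlled by a subset $A \subseteq \mathbb{N}$. Residual finiteness is automatic in this setting, and the three computability parameters involved---decidability of the word problem, decidability of the set of finite quotients, and existence of a recursive presentation---become tunable Turing-theoretic features that one can try to adjust independently by varying $A$ and the ambient product. In both cases the proof will construct a specific group, verify residual finiteness (free from the embedding into a product of finites), and then run two computability arguments, one for the relevant decidability statement and one for the undecidability statement.

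For Theorem \ref{thm:Thm1} (solvable word problem, uncomputable finite quotients), I would set up the construction so that words can be effectively normalized using only a recursive fragment of the data defining $G$, so that word equality is decidable. At the same time, the question ``does a partial map $f: S \to F$ into a finite group $F$ extend to a morphism $G \to F$?'' should encode a co-r.e.\ hard problem. The key point is that a finite quotient of $G$ can factor through homomorphisms that are invisible to the canonical projection into $\prod F_n$, and the existence of such homomorphisms can be made to depend on a strictly stronger oracle than the one needed to compare elements.

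For Theorem \ref{thm:THM2} (computable finite quotients, unsolvable word problem), I would push in the opposite direction: arrange for the finite quotients of $G$ to be described by a recursive list of data, so that extension of a partial map $f$ is decidable, while $G$ itself fails to be recursively presented. Since a finitely generated group that is not recursively presented has non-r.e.\ word problem, unsolvability of the word problem will follow for free once non-recursive-presentability is established. The delicate part is to build a non-recursively-presented group whose set of finite quotients is nevertheless recursively decidable; this runs against the intuition, reinforced by McKinsey's argument recalled in the introduction, that finite-quotient information and presentation information are strongly coupled.

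The main obstacle, common to both theorems, is to decouple inside a single Dyson-style group the Turing complexity of the word problem from the Turing complexity of the set of finite quotients. I expect the proofs to proceed by choosing two subsets $A, B \subseteq \mathbb{N}$ with incomparable recursion-theoretic status---one recursive and one not in Theorem \ref{thm:Thm1}, with the roles swapped in Theorem \ref{thm:THM2}---and encoding each into a distinct feature of the ambient direct product, ensuring that the natural semi-algorithm testing candidate morphisms into finite groups and the natural semi-algorithm testing word equality each see only one of the two sets. Verifying this separation rigorously, and in particular showing that no clever reduction between the two algorithms exists, is where the technical heart of the argument will lie.
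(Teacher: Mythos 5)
Your proposal correctly identifies the family of examples (Dyson's groups, parametrized by a subset of the integers) and the correct high-level goal of decoupling the complexity of the word problem from that of the finite quotients, but two of its load-bearing assumptions fail, and they are exactly where the paper's work lies. First, residual finiteness is \emph{not} automatic in this setting. Dyson's group $L(\mathcal{A})$ is not presented as a subgroup of a product of finite groups; it is the double of the lamplighter group $\mathbb{Z}\wr\mathbb{Z}/2\mathbb{Z}$ over the subgroup generated by $\left\{ a^{i}\varepsilon a^{-i},\,i\in\mathcal{A}\right\} $, and it is residually finite if and only if $\mathcal{A}$ is closed in the profinite topology $\mathcal{PT}(\mathbb{Z})$ (Proposition \ref{prop:Main-prop-1-2-3}, point (2)). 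This closedness requirement is the binding constraint: without it both theorems reduce to easy recursion-theoretic exercises (compare Proposition \ref{prop:Determinable not re} and the remark preceding Lemma \ref{lemma for thm 2}), and Dyson's own 1974 examples of groups with solvable word problem but without CFQ fail to prove Theorem \ref{thm:Thm1} precisely because they are not residually finite. By declaring residual finiteness ``automatic,'' your plan skips the technical heart of both constructions.

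Second, your proposed mechanism for the separation --- two sets $A,B$ of incomparable Turing degree encoded into ``distinct features,'' followed by a verification that ``no clever reduction exists'' between the two natural semi-algorithms --- is not workable as stated: you give no method for ruling out reductions, and such non-reduction arguments are exactly what one must avoid. The paper needs no such argument because it proves clean equivalences for a \emph{single} set $\mathcal{A}$: the word problem of $L(\mathcal{A})$ is solvable if and only if $\mathcal{A}$ is recursive (point (1) of Proposition \ref{prop:Main-prop-1-2-3}), while $L(\mathcal{A})$ has CFQ if and only if the \emph{function} $n\mapsto\mathcal{A}\bmod n$ is recursive (point (3)). These are two different computability attributes of the same set, and the whole problem becomes constructing closed sets for which they disagree: Lemma \ref{lemma for thm 1} builds a closed recursive $\mathcal{A}$ with $\mathcal{A}\bmod n$ uncomputable (prime powers encoding the halting problem), giving Theorem \ref{thm:Thm1}, and Lemma \ref{lemma for thm 2} builds a closed non-recursive $\mathcal{A}$ with $\mathcal{A}\bmod n$ computable, giving Theorem \ref{thm:THM2}. (The paper also gives a second, independent proof of Theorem \ref{thm:THM2} via Slobodskoi's theorem and Bridson--Wilton: a group finitely presented \emph{as a residually finite group} has CFQ by Proposition \ref{prop: same quotients}, yet can have unsolvable word problem.) Your remark that non-recursive-presentability yields unsolvable word problem for free is consistent with the second construction, but everything hinges on producing such a set that is moreover closed in $\mathcal{PT}(\mathbb{Z})$, which your outline never addresses.
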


We proceed as follows. 

In the first section, we give several equivalent definitions of CFQ
groups, and we explain how the profinite topology on a group can be
used to describe the various decisions problems that can be solved
using McKinsey-type algorithms. The second section quickly enumerates
some easy properties: free or direct products of groups with computable
finite quotients also have this property, etc. In the third section
interactions with the depth function for residually finite groups
are explained. The fourth section provides examples of infinitely
presented groups with CFQ: wreath products of groups with CFQ and
L-presented groups (including some well known torsion groups). In
the final section, we prove Theorem \ref{thm:Thm1}, thanks to Dyson's
method which uses doubles of the lamplighter group, and give two proofs
of Theorem \ref{thm:THM2}, one using the same technique, and another
one that relies on Slobodskoi's work in \cite{Slobodskoi1981}.

Following \cite{Dyson1974}, throughout this article, recursively
presented groups will be called re groups (for recursively enumerable),
and groups in which there is an algorithm that recognizes non-trivial
elements will be called co-re groups. A group has solvable word problem
if and only if it is re and co-re. 

\section{CFQ Groups}

\subsection{Equivalent definitions }

Just as a group with solvable word problem is a group in which words
in the generators corresponding to the identity can be enumerated
by an algorithm which respects a computable ordering on the set of
words in the generators, or a re group is a group in which these words
can be enumerated, but without any guarantee on the order of the enumeration,
groups with CFQ, ReFQ or co-ReFQ can be equivalently characterized
by enumeration of their finite quotients. Let us precise this. 

$G$ is still a group generated by $S$, of cardinal $n$. $S$ can
be seen as $\left\{ 1,\,...,n\right\} $. 

Call a $n$-marked finite group a pair $(F,f)$, where $F$ is a finite
group given by its Cayley table, and $f$ is a function from $\left\{ 1,...,n\right\} $
to $F$ whose image generates all of $F$. Consider an effective enumeration
$(F_{1},f_{1})$, $(F_{2},f_{2})$, $(F_{3},f_{3})$,... of all $n$-marked
finite groups, which satisfies $card(F_{n})\leq card(F_{n+1})$. (This
can be obtained by listing in order all possible Cayley tables, then
listing all $n$-tuples from those tables and determining when a tuple
defines a generating set). Define $\mathcal{A}_{G}\subseteq\mathbb{N}$
to be the set of indices $k$ for which $f_{k}$ defines a morphism
from $G$ to $F_{k}$. Then $G$ has CFQ, ReFQ or co-ReFQ if and only
if $\mathcal{A}_{G}$ is, respectively, a recursive set, a recursively
enumerable set or a co-recursively enumerable set. 

Of course, those definitions are independent of a choice of a generating
family. 
\begin{prop}
Having one of CFQ, ReFQ or co-ReFQ is independent of a choice of a
generating family. 
\end{prop}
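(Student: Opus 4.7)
The plan is to build a reduction between the algorithmic problems for the two markings. Let $S$ and $S'$ be two finite generating families of $G$. Since each generates $G$, I fix once and for all, for every $s \in S$, a word $w_s$ in $S' \cup (S')^{-1}$ representing $s$, and symmetrically a word $v_{s'}$ in $S \cup S^{-1}$ for every $s' \in S'$.

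Given an input $(F, f')$ with $f' : S' \to F$, I associate to it $(F, f)$, where $f : S \to F$ is defined by $f(s) := w_s(f'(S'))$, evaluated in $F$. The central claim is that $f'$ extends to a homomorphism $G \to F$ if and only if both (a) $f$ extends to a homomorphism $G \to F$, and (b) for every $s' \in S'$, $v_{s'}(f(S)) = f'(s')$ in $F$. The ``only if'' direction is immediate: an extension $\widehat{f'}$ satisfies $\widehat{f'}(s) = w_s(f'(S')) = f(s)$, so it extends $f$, and applying $\widehat{f'}$ to the identity $s' = v_{s'}$ in $G$ yields (b). For the converse, if $\widehat{f}$ extends $f$ and (b) holds, then $\widehat{f}(s') = v_{s'}(f(S)) = f'(s')$, so $\widehat{f}$ also extends $f'$. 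Since condition (b) is a finite computation in $F$ and hence decidable, the required algorithm for $S'$-markings is obtained by first testing (b): if (b) fails, then $f'$ does not extend and the algorithm answers accordingly (output ``no'' for CFQ, halt for co-ReFQ, loop forever for ReFQ); if (b) holds, one defers to the $S$-algorithm applied to $(F, f)$. Exchanging the roles of $S$ and $S'$ yields the reverse direction, so the three properties transfer both ways.

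The step that requires real care --- the potential obstacle if overlooked --- is that the naive reduction without the check (b) would be incorrect: $f$ may perfectly well extend to a homomorphism whose restriction to $S'$ disagrees with $f'$, even when $f'$ itself has no extension. For instance, with $G = \mathbb{Z}$, $S = \{1\}$, $S' = \{2, 3\}$, sending both $2$ and $3$ to $1 \in \mathbb{Z}/5\mathbb{Z}$ does not extend, yet the induced map on $S$ sends $1$ to $0$, which extends trivially to the zero morphism. Including (b) is exactly what eliminates such spurious cases, and from there the argument is routine.
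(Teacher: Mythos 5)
Your proof is correct and follows essentially the same route as the paper's: transport the marking to the other generating set via fixed rewriting words, observe that extendability of the original marking is equivalent to extendability of the induced one \emph{together with} a finite compatibility check in $F$, and note that this check is decidable before knowing whether the induced map extends. Your explicit counterexample with $\mathbb{Z}$ is a nice illustration of exactly the subtlety the paper's proof also guards against.
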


\begin{proof}
Let $S$ and $T$ be two finite generating sets of a group $G$ (not
necessarily of the same cardinal). Fix for each $s$ in $S$ an expression
$s=t_{1}^{\alpha_{1}}...t_{k}^{\alpha_{k}}$, with $\alpha_{i}\in\left\{ -1,1\right\} $
and $t_{i}\in T$, that gives $s$ as a product of elements of $T$
or their inverses, and for each $t$ in $T$ an expression $t=s_{1}^{\beta_{1}}s_{2}^{\beta_{2}}...s_{k}^{\beta_{k}}$
that describes $t$ in terms of the generators of $S$ and their inverses.
For a finite group $F$ and a function $f$ from $S$ to $F$, define
the function $f'$ from $T$ to $F$ by $f'(t)=f(s_{1})^{\beta_{1}}...f(s_{k})^{\beta_{k}}$.
The function $f$ defines a homomorphism if, and only if, $f'$ also
defines a homomorphism $\varphi'$, that satisfies $\varphi'(s)=f(s)$
for $s$ in $S$. That last condition is an equality in $F$ that
can be tested using the expressions $s=t_{1}^{\alpha_{1}}...t_{k}^{\alpha_{k}}$,
even before it is known whether or not $f'$ extends. Using this,
all three properties, CFQ, ReFQ, co-ReFQ can be seen to be independent
of the chosen generating family of $G$.
\end{proof}

\subsection{\label{subsec:Variations-on-McKinsey's}Variations on McKinsey's
algorithm and the profinite topology }

It was already explained, when discussing McKinsey's original algorithm,
that the notion of residually finite group becomes a very natural
one to introduce when one asks for a sufficient condition for a group
with CFQ to be co-re. We can more generally search for conditions
on a finitely generated group $G$ with CFQ, that might allow to solve
various algorithmic problems. We will see that such conditions can
be expressed through the use of the profinite topology on $G$. In
what follows, we will say that we use McKinsey's algorithm to mean
that we enumerate all finite quotients of a group, checking some condition
in each quotient, and stopping when a finite quotient is found that
satisfies the required condition. 

Fix a finitely generated group $G$ with CFQ (or simply ReFQ). 

We will search for conditions that allow, given two disjoint subsets
$A$ and $B$ of $G$, to\emph{ distinguish $A$ from $B$} using
McKinsey's algorithm, that is to say to decide, given an element of
$G$ that belongs to $A\cup B$, whether it belongs to $A$ or to
$B$. 

The first condition we need is that, given a morphism from $G$ onto
a finite group $F$, it be possible to completely determine the images
of $A$ and of $B$ in $F$. We will thus need the following definition:
\begin{defn}
\label{def:DETERMINABLE in finite qutients}A subset $A$ of $G$
is said to be \emph{determinable in finite quotients of $G$} if there
exists an algorithm that, given a morphism $\phi$ from $G$ onto
a finite group $F$, can determine the image $\phi(A)$ (i.e. solve
the membership problem for $\phi(A)$ in $F$). 

A family $(A_{i})_{i\in\mathbb{N}}$ of subsets of $G$ is \emph{uniformly}
\emph{determinable in finite quotients of $G$ }if each $A_{i}$ is
determinable in finite quotients of $G$, and if the algorithm that
determines $A_{i}$ in finite quotients of $G$ depends recursively
of $i$. 
\end{defn}

Note, as an example, that a finitely generated subgroup $H$ of a
group $G$ is determinable in the finite quotients of $G$, because
the image of $H$ in a quotient of $G$ is the group generated by
the images of the generators of $H$. 

The property of being determinable in finite quotients is interesting
in itself, however we will not give it much attention in this paper.
We still remark the following.

It is easy to build a family $(A_{i})_{i\in\mathbb{N}}$ of subsets
of $\mathbb{Z}$, such that: it is uniformly recursively enumerable,
but not uniformly determinable in finite quotients of $\mathbb{Z}$.
But a stronger result will naturally appear in this paper as a byproduct
of the proof of Theorem \ref{thm:Thm1}:
\begin{prop}
There exists a recursive subset of $\mathbb{Z}$ that is not determinable
in finite quotients of $\mathbb{Z}$. 
\end{prop}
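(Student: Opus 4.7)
The plan is to exhibit a recursive $A \subseteq \mathbb{Z}$ such that, for an odd prime $p$, the question ``does $A$ contain a multiple of $p$?'' already encodes a non-recursive set. Since answering that question is part of computing the image of $A$ in the finite quotient $\mathbb{Z}/p\mathbb{Z}$, this will witness that $A$ cannot be uniformly determined in finite quotients of $\mathbb{Z}$.

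Concretely, I would fix an r.e.\ non-recursive set $E \subseteq \mathbb{N}$ with an injective computable enumeration $s \mapsto e(s)$, let $p_k$ denote the $k$-th odd prime, and set
\[
A = \{\, p_{e(s)} \cdot 2^{s} : s \geq 0 \,\} \subseteq \mathbb{Z}.
\]
To check that $A$ is recursive, I would take a positive integer $x$, factor it as $x = 2^{s}m$ with $m$ odd, and observe that $x \in A$ iff $m = p_{e(s)}$; this condition only requires computing the single value $e(s)$ and comparing, so it is effective.

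The reduction then runs as follows. Because $p_k$ is odd, $p_k$ divides $p_{e(s)} \cdot 2^{s}$ iff $p_k = p_{e(s)}$ iff $e(s) = k$; hence $A$ contains a multiple of $p_k$ precisely when $k$ lies in the range of $e$, i.e.\ iff $k \in E$. If $A$ were determinable in finite quotients of $\mathbb{Z}$, then given $k$ one could compute $A \bmod p_k$ and in particular decide whether $0$ belongs to it, thereby deciding membership in $E$ and contradicting its non-recursiveness.

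I expect the only delicate point to be ensuring that no ``parasitic'' multiple of $p_k$ enters $A$ from a stage $s' \neq e^{-1}(k)$; this is handled by the shape of the elements, each of which has exactly one odd prime factor, so divisibility of an element of $A$ by the odd prime $p_k$ pinpoints a unique stage and cleanly mirrors membership in $E$.
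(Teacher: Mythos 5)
Your proof is correct. Both you and the paper argue by reducing an r.e.\ non-recursive problem to the question ``does $A$ contain a multiple of a designated prime?'', which is exactly what an algorithm determining $A$ in the quotient $\mathbb{Z}/p\mathbb{Z}$ would have to answer at $0$; and in both cases recursiveness of $A$ is secured by tagging each element with enough data to verify membership locally (for you, the $2$-adic valuation pins down the unique enumeration stage $s$, so deciding $x\in A$ costs one evaluation of $e$). The paper, however, does not give a fresh construction for this proposition: it simply points to the set built for Theorem 1 (Lemma \ref{lemma for thm 1}), where each Turing machine $M_n$ contributes powers $p_{2n}^k$ while running and a terminal $p_{2n+1}^{k+1}$ if it halts, so that ``$0\in\mathcal{A}\bmod p_{2n+1}$'' encodes the halting problem. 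That construction is more elaborate because it must also produce a set \emph{closed} in $\mathcal{PT}(\mathbb{Z})$, which is what makes the associated Dyson group residually finite; your set $\{p_{e(s)}2^s\}$ is visibly not closed (its elements accumulate on $0$ in the profinite topology whenever infinitely many stages occur), so it proves the proposition as stated but could not be substituted into the proof of Theorem \ref{thm:Thm1}. In short: your argument is a cleaner, self-contained proof of exactly this proposition; the paper's buys the extra topological property needed elsewhere.
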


\begin{proof}
See Lemma \ref{lemma for thm 1}, disregarding the statement about
the subset of $\mathbb{Z}$ being closed. 
\end{proof}
On the other hand, it is easy to build a subset of $\mathbb{Z}$ that
is determinable in its finite quotients, but not recursive. 
\begin{prop}
\label{prop:Determinable not re}There exists a subset of $\mathbb{Z}$
that is not re, but still is determinable in finite quotients of $\mathbb{Z}$. 
\end{prop}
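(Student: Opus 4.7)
The plan is to exploit the fact that the finite quotients of $\mathbb{Z}$ are very coarse invariants. Every finite quotient is some $\mathbb{Z}/n\mathbb{Z}$, and under any surjective homomorphism $\phi:\mathbb{Z}\to\mathbb{Z}/n\mathbb{Z}$, the image of (for instance) the set of negative integers is already all of $\mathbb{Z}/n\mathbb{Z}$. Therefore any subset $A\subseteq\mathbb{Z}$ which contains every negative integer satisfies $\phi(A)=\mathbb{Z}/n\mathbb{Z}$ for every such $\phi$, and so is trivially determinable in finite quotients: the algorithm simply outputs the entire codomain on every input.

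It then remains only to arrange that $A$ fails to be recursively enumerable. The obvious choice is to fix any non-re set $C\subseteq\mathbb{N}$ (for example the complement of the halting problem) and define
$$A \;=\; C \;\cup\; \{\,n\in\mathbb{Z} : n<0\,\}.$$
If $A$ were re, one could enumerate $A$ and retain only the non-negative entries, thereby enumerating $A\cap\mathbb{N}=C$, contradicting the choice of $C$. Hence $A$ is determinable in finite quotients but is not re, as desired.

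There is no substantive obstacle in this argument: the statement is essentially a remark, whose content is that determinability in finite quotients allows one to freely enlarge a subset of $\mathbb{Z}$ by adjoining any recursion-theoretically complicated elements, provided the original subset already surjects onto every finite quotient. The interest of the proposition lies only in its contrast with the previous one (a recursive set not determinable in finite quotients), which is where the real work has to be done.
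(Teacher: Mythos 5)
Your proof is correct and rests on the same idea as the paper's: exhibit a non-re subset of $\mathbb{Z}$ whose image in every finite quotient $\mathbb{Z}/n\mathbb{Z}$ is all of $\mathbb{Z}/n\mathbb{Z}$, so that determinability holds for the trivial reason that the algorithm can always answer ``the whole group''. The paper achieves the full-image property by spreading the values $kh(m)+r$ of a function $h$ dominating every recursive function across all residue classes, whereas you achieve it by adjoining all negative integers to an arbitrary non-re subset of $\mathbb{N}$; this is a slightly simpler implementation of the same approach.
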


\begin{proof}
For a function $h$ that grows faster than any recursive function,
consider the enumeration $2h(1)$, $2h(2)+1$, $3h(3)$, $3h(4)+1$,
$3h(5)+2$, $4h(6)$,... This defines a set that is not re, but whose
image in any quotient $\mathbb{Z}/n\mathbb{Z}$ of $\mathbb{Z}$ is
all of $\mathbb{Z}/n\mathbb{Z}$. 
\end{proof}
Given two disjoint subsets $A$ and $B$ of $G$ that are indeed determinable
in finite quotients of $G$, the profinite topology of $G$ can be
used to decide whether McKinsey's algorithm can tell them apart. 

The \emph{profinite topology} on a group $G$ was introduced in \cite{Hall1950},
it is the topology whose open basis consists of cosets of finite index
normal subgroups of $G$. We denote $\mathcal{PT}(G)$ the profinite
topology on $G$. A closed subset of $G$ in $\mathcal{PT}(G)$ is
called a \emph{separable} set. We note $\bar{A}$ the closure of a
set $A$ in $\mathcal{PT}(G)$. The following easy facts render explicit
the link between the profinite topology and McKinsey-type algorithms:
\begin{fact}
A subset $A$ of a group $G$ is open in $\mathcal{PT}(G)$ if and
only if for any element $a$ of $A$, there is a morphism $\phi$
from $G$ onto a finite group $F$ such that $\phi^{-1}(\phi(a))\subseteq A$,
i.e. such that if an element of $G$ has the same image as $a$ in
$F$, it also belongs to $A$. 
\begin{fact}
\label{fact:Closure in PT(G)}The closure $\bar{B}$ of a subset $B$
of $G$ is the biggest set of elements that satisfy the following
condition:
\end{fact}

\begin{center}
For any morphism $\phi$ from $G$ onto a finite group $F$, $\phi(\bar{B})\subseteq\phi(B)$. 
\par\end{center}

\end{fact}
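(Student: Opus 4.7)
The plan is to unfold the definition of closure in the profinite topology and reconcile it with the stated condition on images in finite quotients.

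First I would recall that, by definition, an element $g \in G$ lies in $\bar{B}$ if and only if every open neighborhood of $g$ meets $B$. Since the topology $\mathcal{PT}(G)$ has as a basis the cosets of finite index normal subgroups, and every finite index normal subgroup $N$ is the kernel of a surjection $\phi : G \twoheadrightarrow F$ onto some finite group (take $F = G/N$), the basic open neighborhoods of $g$ are exactly the fibers $\phi^{-1}(\phi(g)) = gN$, as $\phi$ ranges over all surjections from $G$ onto finite groups. Consequently, $g \in \bar{B}$ if and only if for every such $\phi$ one has $\phi^{-1}(\phi(g)) \cap B \neq \emptyset$, which is the same as saying $\phi(g) \in \phi(B)$.

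This reformulation immediately yields the identity
\[
\bar{B} = \{\, g \in G : \phi(g) \in \phi(B) \text{ for every surjection } \phi : G \twoheadrightarrow F \text{ onto a finite group}\,\},
\]
from which both halves of the fact fall out. On the one hand, $\bar{B}$ itself satisfies $\phi(\bar{B}) \subseteq \phi(B)$ for every such $\phi$, since each $g \in \bar{B}$ has $\phi(g) \in \phi(B)$ by the display above. On the other hand, if $S \subseteq G$ is any set satisfying $\phi(S) \subseteq \phi(B)$ for every finite quotient $\phi$, then every $g \in S$ verifies $\phi(g) \in \phi(B)$ for all $\phi$, hence lies in $\bar{B}$; so $S \subseteq \bar{B}$, proving maximality.

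There is no real obstacle here; the argument is a direct translation between the topological definition of closure and the algebraic condition, with the only small observation being the identification of basic neighborhoods $gN$ with fibers of quotient maps onto $G/N$. For completeness I would also note, in parallel, that Fact 1.5 admits the same proof strategy: openness of a set $A$ at a point $a$ means that some basic neighborhood $aN$ sits inside $A$, which is exactly the existence of a finite quotient $\phi$ with $\phi^{-1}(\phi(a)) \subseteq A$.
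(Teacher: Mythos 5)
Your argument is correct and is exactly the intended one: the paper states these as unproved ``easy facts,'' and the natural proof is precisely your translation between basic neighborhoods $gN$ in $\mathcal{PT}(G)$ and fibers $\phi^{-1}(\phi(g))$ of surjections onto finite quotients. Both the characterization of $\bar{B}$ and the maximality claim follow as you describe, so there is nothing to add.
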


This shows that the closure of a set $B$ in $\mathcal{PT}(G)$ is
precisely the set of elements that cannot be distinguished from $B$
using McKinsey's algorithm. 

We can now give conditions that allow two disjoint subsets $A$ and
$B$ of $G$ to be distinguishable by McKinsey's algorithm. 
\begin{prop}
\label{prop:Profinite Result}Let $A$ and $B$ be disjoint subsets
of $G$ that are determinable in finite quotients of $G$. Then McKinsey's
algorithm can be used to distinguish $A$ from $B$ if and only if
the following two conditions hold: 
\[
A\cap\bar{B}=\varnothing
\]
\end{prop}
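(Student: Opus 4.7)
The plan is to translate the two statements, ``McKinsey's algorithm distinguishes $A$ from $B$'' and ``$A\cap\bar{B}=\varnothing$'' (plus the symmetric condition $\bar{A}\cap B=\varnothing$, which should be the second displayed equation), into a single common language via Fact \ref{fact:Closure in PT(G)}: $g\in\bar{B}$ is precisely the assertion that no finite quotient $\phi\colon G\to F$ can witness $\phi(g)\notin\phi(B)$. Everything then reduces to reading off this equivalence from the definition of the algorithm.

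For the ``only if'' direction I would argue by contrapositive. Suppose there is some $a\in A\cap\bar{B}$. By Fact \ref{fact:Closure in PT(G)}, for every morphism $\phi$ from $G$ onto a finite group $F$ we have $\phi(a)\in\phi(B)$, and trivially $\phi(a)\in\phi(A)$ as well. Since $A$ and $B$ are determinable in finite quotients, the only information the algorithm can extract from $F$ is membership of $\phi(a)$ in $\phi(A)$ and in $\phi(B)$; but both memberships hold in every quotient, so no quotient ever excludes either alternative and the algorithm cannot halt with a correct answer on $a$. The same reasoning applied to $B$ and $\bar{A}$ forces $\bar{A}\cap B=\varnothing$.

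For the ``if'' direction I would exhibit the algorithm directly. Given $g\in A\cup B$, enumerate all pairs $(F,\phi)$ where $\phi$ is a morphism from $G$ onto a finite group $F$; this is possible because $G$ has ReFQ. For each such $(F,\phi)$, use the determinability of $A$ and of $B$ to compute the subsets $\phi(A),\phi(B)\subseteq F$, and then test whether $\phi(g)\notin\phi(B)$, in which case output ``$g\in A$'', or $\phi(g)\notin\phi(A)$, in which case output ``$g\in B$''. Correctness is immediate: a positive answer ``$g\in A$'' forces $g\notin B$ (as $\phi(g)\notin\phi(B)$), and similarly for the other branch. Termination uses exactly the hypothesis: if $g\in A$, then $g\notin\bar{B}$ by assumption, so by Fact \ref{fact:Closure in PT(G)} there exists a finite quotient $\phi\colon G\to F$ with $\phi(g)\notin\phi(B)$, and this quotient eventually appears in the enumeration; the symmetric case $g\in B$ is identical using $\bar{A}\cap B=\varnothing$.

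There is no real obstacle beyond bookkeeping. The only points requiring a line of care are that determinability furnishes not merely a decision procedure for $\phi(A)$ and $\phi(B)$ separately but their full description inside the finite group $F$ (so the tests ``$\phi(g)\notin\phi(B)$'' are effective), and that the enumeration of morphisms is available thanks to the standing ReFQ hypothesis on $G$ — otherwise we would not even have a starting point for running a McKinsey-style algorithm.
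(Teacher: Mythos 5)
Your proof is correct and follows exactly the route the paper intends: the paper's own proof is the single sentence ``This is a simple consequence of Fact \ref{fact:Closure in PT(G)},'' and your argument is precisely the expansion of that remark, using the closure characterization for both termination (in the ``if'' direction) and impossibility (in the ``only if'' direction). You also correctly identify the two implicit ingredients the paper leaves unstated, namely the standing ReFQ hypothesis for enumerating quotients and the role of determinability in making the tests $\phi(g)\notin\phi(A)$, $\phi(g)\notin\phi(B)$ effective.
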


\[
\bar{A}\cap B=\varnothing
\]

\begin{proof}
This is a simple consequence of Fact \ref{fact:Closure in PT(G)}. 
\end{proof}
McKinsey's original result on residually finite groups can thus be
interpreted as an application of this proposition to singletons, and
a residually finite group is precisely a group in which all the singletons
are closed in the profinite topology. 

Two other well studied families of groups fall in the range of Proposition
\ref{prop:Profinite Result}: conjugacy separable groups, and LERF
groups. 

A conjugacy separable group is a group $G$ in which all the conjugacy
classes are separable. It is easy to see that the conjugacy classes
of a finitely generated group $G$ are always uniformly determinable
in finite quotients of $G$ (when a class $C$ is given by any of
its elements), thus Proposition \ref{prop:Profinite Result} can be
used to distinguish conjugacy classes.

A LERF group (for locally extended residually finite), or subgroup
separable group, is a group $G$ whose finitely generated subgroups
are separable. 

We then have the following proposition:
\begin{prop}
\label{Prop MCKINSEY}Let $G$ be a finitely generated group with
ReFQ. 
\begin{itemize}
\item If $G$ is  residually finite, then it is co-re. If it is re and residually
finite, it has solvable word problem. 
\item If $G$ is conjugacy separable, then there exits an algorithm that
decides when two of its elements are not conjugate. If it is re and
conjugacy separable, then it has solvable conjugacy problem.
\item If $G$ is LERF, there is an algorithm that, given a tuple $(x_{1},...,x_{n},g)$
of elements of $G$, stops exactly when $g$ does not belong to the
subgroup of $G$ generated by $(x_{1},...,x_{n})$. If it is re and
LERF, it has solvable generalized word problem. 
\end{itemize}
\end{prop}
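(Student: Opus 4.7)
The plan is to view each of the three bullets as a one-sided application of Proposition \ref{prop:Profinite Result} combined with Fact \ref{fact:Closure in PT(G)}. The general template I would use is this: fix a subset $B \subseteq G$ that is determinable in finite quotients and closed in $\mathcal{PT}(G)$; then by Fact \ref{fact:Closure in PT(G)}, for every $a \notin B$ there exists a finite quotient $\phi : G \to F$ with $\phi(a) \notin \phi(B)$, while for $a \in B$ no such quotient exists. Since ReFQ allows us to enumerate finite quotients of $G$ and determinability allows us to compute $\phi(B)$ inside each $F$, the McKinsey procedure that enumerates pairs $(F,\phi)$ and halts as soon as $\phi(a) \notin \phi(B)$ will halt exactly when $a \notin B$. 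This produces the co-re half of each bullet.

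It then remains to specify $B$ and to check the two hypotheses in each case. For the first bullet, I would take $B = \{e\}$, trivially determinable in finite quotients, which is closed in $\mathcal{PT}(G)$ precisely when $G$ is residually finite. For the second, $B$ would be the conjugacy class of a given element $h$; as already noted in the text, conjugacy classes are uniformly determinable in finite quotients (the class of $\phi(h)$ in $F$ is obtained by exhaustive search in $F$), and conjugacy separability is by definition the hypothesis that all conjugacy classes are closed in $\mathcal{PT}(G)$. For the third, $B = \langle x_1, \ldots, x_n \rangle$; the example following Definition \ref{def:DETERMINABLE in finite qutients} shows that finitely generated subgroups are determinable in finite quotients, and LERF is by definition the condition that such subgroups are closed.

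For the decidability assertions, I would pair the co-re algorithm above with the corresponding re procedure available in any re group: enumeration of trivial words for the word problem, enumeration of pairs $(g,h)$ admitting $w$ with $wgw^{-1}h^{-1} =_G 1$ for the conjugacy problem, and enumeration of elements of $\langle x_1, \ldots, x_n \rangle$ as products of the $x_i^{\pm 1}$ followed by an equality test against $g$ for the generalized word problem. Running the two procedures in parallel on a given input yields a decision procedure. I do not expect any substantial obstacle: once the dictionary between closure in $\mathcal{PT}(G)$ and the reach of McKinsey's algorithm is fixed by Fact \ref{fact:Closure in PT(G)}, the proposition is little more than a translation, and the only care required is the routine verification of determinability in the three cases.
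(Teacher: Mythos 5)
Your proposal is correct and follows essentially the same route as the paper: both reduce each bullet to the profinite-topology criterion (Proposition \ref{prop:Profinite Result} via Fact \ref{fact:Closure in PT(G)}) applied with $B$ a singleton, a conjugacy class, or a finitely generated subgroup, checking that these are determinable in finite quotients and closed under the respective separability hypothesis, and then pair the resulting co-re procedure with the standard re procedure to get decidability. The only difference is one of presentation --- you spell out the one-sided McKinsey search and the re halves explicitly, whereas the paper states this in two lines --- so there is nothing to correct.
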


\begin{proof}
All three points follow from Proposition \ref{prop:Profinite Result},
noticing that it provides a uniform algorithm, and using it respectively
with:
\begin{itemize}
\item $A$ and $B$ being singletons,
\item $A$ and $B$ being conjugacy classes,
\item $A$ being a finitely generated subgroup of $G$ and $B$ a singleton. 
\end{itemize}
In each case, one needs to use the fact that those sets are uniformly
determinable in finite quotients, which is straightforward. 
\end{proof}
Because re groups naturally have co-ReFQ (Proposition \ref{prop:re=00003D>coREFQ}),
the statements that concern re group in the previous proposition could
be formulated with CFQ instead of ReFQ, without loss of generality. 

Proposition \ref{Prop MCKINSEY} follows in a very straightforward
way from the definitions of residually finite, of conjugacy separable
and of LERF groups, and it is surprising that the study of these properties
was not followed by a systematic study of the properties CFQ and ReFQ.
The author could point the lector to papers where it is implied that
re conjugacy separable groups always have solvable conjugacy problem,
which led him to believe it has to be ascertained that not all recursively
presented groups, or residually finite groups, or even residually
finite groups with solvable word problem, have ReFQ. 

\subsection{Membership problem for finite index normal subgroups}

In the article \cite{Bou-Rabee2016}, Bou-Rabee and Seward use, in
the course of a proof (\cite{Bou-Rabee2016}, Proof of Theorem 2),
the fact that, if a group $G$ has solvable ``membership problem
for finite index normal subgroups'', (or ``generalized word problem
for finite index normal subgroups'') and solvable word problem, then
it admits an algorithm that recognizes its finite quotients. 

We will now show that for re groups, having solvable membership problem
for finite index normal subgroups is actually equivalent to having
CFQ. This will allow us to give another point of view on groups with
CFQ, and at the same time making explicit the link to the isomorphism
problem for finite groups given by recursive presentations, which
we will sum up in the next sub-section. 

When formulating the membership problem for finite index normal subgroups,
it is implicit that the normal subgroup is given by a finite generating
family. 

Indeed, the membership problem for finite index normal subgroups in
a group $G$ asks for an algorithm that, given a tuple $(x_{1},...,x_{k},g)$
of elements of $G$, the first $k$ elements of which generate a finite
index normal subgroup of $G$, will decide whether $g$ belongs to
that subgroup. As opposed to that, when working with property CFQ,
we describe non-ambiguously a finite index normal subgroup $N$ of
$G$ by a pair $(F,f)$, where $F$ is a finite group and $f$ a function
from the generators of $G$ to $F$, which extends to a group homomorphism,
the kernel of which is precisely $N$. 

Of course, given that second description, the problem ``does $g$
belong to $N$'' is solved by computing the image of $g$ in $F$
to see whether it is the identity of $F$. Thus a group in which one
can go from the description of a normal subgroup by generators to
a description of this subgroup by a morphism necessarily has solvable
membership problem for finite index normal subgroups. We will see
that for re groups this is also sufficient. 

On the other hand, given a description by morphism of the subgroup
$N$, that is a morphism $\varphi:G\rightarrow F$ with $ker\,\varphi=N$,
one can always obtain a description of it by generators, as one can
effectively carry out the well known proof of Schreier's lemma, which
is often used to prove that a finite index subgroup of a finitely
generated group is itself finitely generated. Indeed, if $S$ is a
generating family of $G$, for any $x$ in $F$ and $s$ in $S$,
a preimage $\hat{x}$ of $x$ can be found in $G$, by exhaustive
search, and a preimage of $x\varphi(s)$ can be found as well, call
it $\hat{y}$. Schreier's lemma asserts that the elements of the form
$\hat{x}s\hat{y}^{-1}$ generate $N$. 

This allows us to prove the following (the backward implication is
directly adapted from \cite{Bou-Rabee2016}):
\begin{prop}
\label{prop:Property-ReFQ-is}Property ReFQ is equivalent to having
co-re membership problem for finite index normal subgroups, that is
to having an algorithm that decides when an element is not in a given
finite index normal subgroup, and does not terminate otherwise. 
\end{prop}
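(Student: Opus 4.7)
The plan is to prove both implications separately. For the forward direction (ReFQ implies co-re membership for finite index normal subgroups), given a tuple $(x_1,\ldots,x_m,g)$ in which the $x_i$ generate a finite index normal subgroup $N$ of $G$, I want a semi-algorithm that halts exactly when $g \notin N$. The idea is to enumerate all marked finite groups $(F,f)$, use the ReFQ algorithm to semi-detect those for which $f$ extends to a morphism $\varphi: G \to F$, and halt as soon as one is found with $\varphi(x_i) = e$ for every $i$ but $\varphi(g) \neq e$. If $g \notin N$, the canonical morphism $G \to G/N$ is such a witness and will be detected in finite time. If $g \in N$, any such $\varphi$ would kill the normal closure of the $x_i$ (which equals $N$), hence kill $g$, so no witness exists.

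For the backward direction (co-re membership implies ReFQ), I reverse the Schreier's lemma construction already sketched in the paragraph preceding the proposition. Given $(F,f)$, let $f^\ast : F(S) \to F$ be the induced homomorphism from the free group on $S$, and put $K := \ker f^\ast$, a finite index normal subgroup of $F(S)$ of index $n := |\mathrm{Im}(f^\ast)|$. Applied effectively in the free group, Schreier's lemma produces in finite time a subgroup generating set $y_1,\ldots,y_m$ of $K$ together with coset representatives $c_1,\ldots,c_n$ of $K$ in $F(S)$.

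Push everything forward through the canonical projection $\pi : F(S) \to G$ and set $x_i := \pi(y_i)$ and $N := \pi(K) = \langle x_1,\ldots,x_m \rangle$. Writing $R := \ker \pi$, one has $[G : N] = [F(S) : KR] \leq n$, so $N$ is always a finite index normal subgroup of $G$, regardless of whether $f$ actually extends. Moreover, $f$ extends to a morphism iff $R \subseteq K$, iff $[G:N] = n$, iff the $n$ projected representatives $\pi(c_1),\ldots,\pi(c_n)$ lie in pairwise distinct cosets of $N$, iff $\pi(c_i^{-1} c_j) \notin N$ for every $i \neq j$.

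The semi-algorithm for ReFQ is therefore to compute $x_1,\ldots,x_m$ and $\pi(c_1),\ldots,\pi(c_n)$, then run the assumed co-re membership semi-algorithm in parallel on each of the finitely many instances $(x_1,\ldots,x_m;\, \pi(c_i^{-1} c_j))$ for $i \neq j$, halting only once every one of these sub-processes has halted. If $f$ extends, all projected elements lie outside $N$ and every sub-process halts; if $f$ does not extend, some $c_i^{-1}c_j$ lies in $KR \setminus K$, so $\pi(c_i^{-1}c_j) \in N$ and the corresponding sub-process never halts. The one point that requires care, and is really the only substantive step, is verifying that $N = \pi(K)$ is always a finite index normal subgroup of $G$ (so that the assumed semi-algorithm is genuinely applicable) and that the extension of $f$ is equivalent to the projected cosets remaining distinct; both follow from elementary index computations in $F(S)$.
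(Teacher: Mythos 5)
Your proof is correct. The forward direction coincides with the paper's: dovetail the ReFQ enumeration of quotients and halt on a quotient killing the $x_{i}$ but not $g$. In the backward direction you follow the same overall strategy as the paper --- apply Schreier's lemma effectively in the free group $F(S)$ to obtain subgroup generators of $K=\ker f^{*}$, push them down to a finite index normal subgroup $N=\pi(K)$ of $G$, and observe that $f$ extends if and only if $[G:N]$ attains its maximal possible value $n=[F(S):K]$ --- but you certify this lower bound on the index differently. The paper blindly enumerates all elements of $G$ and uses the non-membership oracle to collect, one by one, $\mathrm{card}(F)$ elements lying in pairwise distinct cosets of $N$, halting once that many are found; your version instead pushes the explicit Schreier transversal $c_{1},\dots,c_{n}$ down to $G$ and runs the oracle in parallel on the finitely many fixed instances $\pi(c_{i}^{-1}c_{j})$, $i\neq j$, halting when all of them have halted. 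The two procedures terminate under exactly the same circumstances, but yours replaces an unbounded search through $G$ by a set of queries computed in advance, which makes the termination analysis slightly cleaner (and, as a bonus, works verbatim when the image of $f$ does not generate all of $F$, since you count $n=\mathrm{card}(\mathrm{Im}(f^{*}))$ rather than $\mathrm{card}(F)$). Both arguments rest on the same two facts, which you correctly single out as the substantive points: $N$ is normal of finite index in $G$ whether or not $f$ extends, and $[G:N]=[F(S):KR]$ where $R=\ker(F(S)\rightarrow G)$, so that $f$ extends exactly when this index equals $n$.
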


\begin{proof}
Suppose first that $G$ has ReFQ, and let $N$ be a finite index normal
subgroup of $G$ generated by a family $x_{1},...,x_{k}$. Let finally
$g$ be an element of $G$, we want to decide whether $g$ belongs
to $N$. Enumerate the quotients $(F,f)$ of $G$, and look for a
finite quotient in which the image of $g$ is non-trivial, while the
images of $x_{1},...,x_{n}$ are all trivial. If $g$ does not belong
to $N$, such a quotient exists (the projection $G\rightarrow G/N)$,
and this algorithm will terminate. 

Now suppose $G$ has co-re membership problem for finite index normal
subgroups. Write $\langle S\vert R\rangle$ a presentation of $G$.
Let $(F,f)$ be a finite group together with a function from $S$
to $F$. As $f$ does not necessarily define a morphism, we cannot
yet apply Schreier's method. But if $\mathcal{F}_{n}$ is a free group
with basis the $n$ generators of $G$, $f$ does define a morphism
$\varphi$ from $\mathcal{F}_{n}$ to $F$, and thus we can find a
family $x_{1},...,x_{k}$ of elements of $\mathcal{F}_{n}$ that generate
$ker(\varphi)$. $F$ is given by the presentation: $\langle S\vert x_{1},...,x_{k}\rangle$.
(But $x_{1}$, ..., $x_{k}$ generate $ker\,\varphi$ as a group,
and not only as a normal subgroup as would be guaranteed by any presentation
of $F$ on the generators $S$). 

Now $f$ extends to a morphism if and only if $F$ satisfies all the
relations of $G$, that is to say if and only if the relations $x_{1},...,x_{k}$
imply the relations that appear in $R$, that is to say if and only
if $\langle S\vert R,\,x_{1},...,x_{k}\rangle$ is just another presentation
of $F$. But this is a presentation of $G/N$, where $N$ is the subgroup
of $G$ generated by $x_{1},...,x_{k}$. If $f$ does not extend to
a morphism, $G/N$ is a strict quotient of $F$. 

Thus we can do the following: enumerate the elements of $G$, $g_{1},\,g_{2},...$
Then use the membership algorithm for $N$, (which, as we suppose,
can only show something does not belong to $N$), to find elements
that define different classes in $G/N$, that is: find $g_{i_{0}}$
that does not belong to $N$, then $g_{i_{1}}$ which is such that
neither itself nor $g_{i_{0}}g_{i_{1}}^{-1}$ belong to $N$, and
$g_{i_{2}}$ such that $g_{i_{2}}$, $g_{i_{0}}g_{i_{2}}^{-1}$ and
$g_{i_{1}}g_{i_{2}}^{-1}$ don't belong to $N$... If $F$ is a quotient
of $G$, this method will yield $card(F)$ elements, at which point
the algorithm has proven that $F$ is a quotient of $G$. Of course,
if $F$ is not a quotient of $G$, it will never stop. 
\end{proof}
In a re group, determining whether $g$ belongs to the subgroup generated
by $x_{1},...,x_{k}$ can always be done when $g$ belongs to that
group, thus having co-re membership problem for finite index normal
subgroups is equivalent to having solvable membership problem for
finite index normal subgroups. Similarly, re groups always have co-ReFQ
(Proposition \ref{prop:re=00003D>coREFQ}), thus for such a group
ReFQ and CFQ are equivalent. This yields:
\begin{cor}
For recursively presented groups, having CFQ and having solvable membership
problem for finite index normal subgroups are equivalent properties. 
\end{cor}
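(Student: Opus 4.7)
The plan is to assemble the corollary from three ingredients that have all been made available just before the statement: Proposition \ref{prop:Property-ReFQ-is}, the automatic semi-decidability of membership in finitely generated subgroups of re groups, and the automatic co-ReFQ property of re groups (Proposition \ref{prop:re=00003D>coREFQ}). No new construction is needed; the work is purely in chaining the equivalences.

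First I would invoke Proposition \ref{prop:Property-ReFQ-is}, which gives, without any re hypothesis, the equivalence between ReFQ and having a co-re membership problem for finite index normal subgroups. Next I would upgrade ``co-re'' to ``solvable'' on the membership side: in any re group $G$, given a finite tuple $x_1,\ldots,x_k$ and an element $g$, one can semi-decide the positive instance $g \in \langle x_1,\ldots,x_k\rangle$ by enumerating all products of the $x_i^{\pm1}$ and, in parallel, enumerating pairs of words in the generators of $G$ that are declared equal by the re presentation; this is the standard fact that finitely generated subgroups of re groups have re membership problem. Combining this positive semi-decision with the co-re semi-decision given by ReFQ yields a full decision procedure, so for re groups ``co-re membership problem for finite index normal subgroups'' is the same as ``solvable membership problem for finite index normal subgroups''.

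Finally I would upgrade ReFQ to CFQ on the other side, using Proposition \ref{prop:re=00003D>coREFQ}: every re group automatically has co-ReFQ, so for a re group ReFQ and CFQ coincide. Chaining the three equivalences
\[
\text{CFQ} \;\Longleftrightarrow\; \text{ReFQ} \;\Longleftrightarrow\; \text{co-re memb.\ for f.i.\ normal subgroups} \;\Longleftrightarrow\; \text{solvable memb.\ for f.i.\ normal subgroups}
\]
gives the corollary.

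There is really no obstacle here beyond bookkeeping; the only point that deserves a sentence of care is the second equivalence, because it is the place where the re hypothesis is actually used (Proposition \ref{prop:Property-ReFQ-is} itself did not need it, but passing from co-re to decidable on either side does).
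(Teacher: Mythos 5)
Your proposal is correct and follows exactly the paper's own route: the paper likewise chains Proposition \ref{prop:Property-ReFQ-is} with the observation that positive instances of membership in a finitely generated subgroup are semi-decidable in any re group (upgrading co-re to solvable membership), and with Proposition \ref{prop:re=00003D>coREFQ} (upgrading ReFQ to CFQ). Nothing to add.
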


We can use this to show that in a re group with CFQ, from the description
of a finite index normal subgroup $N$ by a generating family $x_{1},...,x_{k}$,
one can deduce a pair $(F,f)$, where $F$ is a finite group and $f$
extends to a morphism $\varphi$ of $G$ to $F$ with kernel $N$. 

Launch two procedures, one is the same as that described in the proof
above: enumerate elements of $G$ that define different cosets of
$G/N$. We get successively better lower bounds on $card(G/N)$: $card(G/N)\geq1,2,3,...$

The other procedure gives upper bounds on the size of $G/N$. Start
from the enumeration of all marked finite groups $(F_{1},f_{1})$,
$(F_{2},f_{2})$,... For each pair $(F_{i},f_{i})$, test whether
$G/N$ is a quotient of the group $F_{i}$ according to one of the
finitely many left inverses of $f_{i}$. This can be done because
$G/N$ is given by a recursive presentation (as we add finitely many
relations to a presentation of $G$ which we suppose re), thus there
is an algorithm that tests whether the finitely many relations of
a finite group $F$ are satisfied in $G/N$, and terminates when indeed
they are. This procedure yields upper bounds on the cardinal of $G/N$. 

At some point, the lower and upper bounds will agree, and we will
know that the pair $(F,f)$ that has $card(F)=card(G/N)$ defines
an isomorphism $F\simeq G/N$, and thus the normal subgroup $N$ is
described by the pair $(F,f)$. 

\subsection{Isomorphism problem for finite groups}

Note that another condition for CFQ appears clearly in the course
of the proof of Proposition \ref{prop:Property-ReFQ-is}: at some
point, it is known that the presentation $\langle S\vert R,\,x_{1}=e,...,x_{k}=e\rangle$
is the presentation of a finite group (even, that it is a quotient
of the given group $F$), and the question ``is $F$ a quotient of
$G$'' is equivalent to ``is this finite group a strict quotient
of $F$''. It follows from this remark: 
\begin{prop}
\label{prop:isomorphismfinitegroups}A group $G$, which admits a
presentation $\langle S\vert R\rangle$, has CFQ if the isomorphism
problem is solvable for the following family of presentations: all
finite presentations of finite groups, and all presentations of the
form $\langle S\vert R,\,R_{1}\rangle$, where $R_{1}$ is a finite
set of relations such that $\langle S\vert R_{1}\rangle$ is finite. 
\end{prop}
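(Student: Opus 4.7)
The plan is to reuse almost verbatim the constructions from the proof of Proposition~\ref{prop:Property-ReFQ-is} and to observe that the final test required there reduces to an instance of the isomorphism problem in the stated family. Fix a presentation $\langle S \mid R\rangle$ of $G$, and let $(F,f)$ be a marked finite group for which one wants to decide whether $f\colon S \to F$ extends to a homomorphism $G \to F$.

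First, I would view $f$ as inducing a surjective homomorphism $\varphi\colon \mathcal{F}_n \to F$, where $\mathcal{F}_n$ is the free group on $S$, and I would run the effective Schreier procedure already recalled above: search exhaustively for preimages $\hat{x}$ of each $x \in F$ and form the elements $\hat{x}\,s\,\widehat{x\varphi(s)}^{-1}$. This produces a finite list $x_1, \ldots, x_k \in \mathcal{F}_n$ generating $\ker \varphi$ as a subgroup, so that $\langle S \mid x_1, \ldots, x_k\rangle$ is a presentation of $F$. Together with the manifestly finite Cayley-table presentation of $F$, I now have a finite presentation of $F$, which is an element of the first family mentioned in the statement.

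Next I would consider the presentation $\langle S \mid R,\, x_1, \ldots, x_k\rangle$. By construction this is of the shape $\langle S \mid R, R_1 \rangle$ with $R_1 = \{x_1, \ldots, x_k\}$ and $\langle S \mid R_1 \rangle = F$ finite, so it lies in the second family of the statement. The group it presents is the quotient of $F$ by the normal closure of $\varphi(R)$, hence a finite quotient of $F$, and it is isomorphic to $F$ precisely when every element of $R$ is killed by $\varphi$, which is the very condition that $f$ extend to a morphism $G \to F$. Thus the decision "does $f$ extend?" is equivalent to the isomorphism test between $\langle S \mid R, x_1, \ldots, x_k\rangle$ and $F$, which by hypothesis is algorithmically settled.

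There is no real obstacle: the only thing to verify carefully is that each object one feeds into the assumed isomorphism oracle actually lies in the stipulated family, and that the Schreier step is effective. Both points are routine (the first is immediate from the construction, and the second was already used in Proposition~\ref{prop:Property-ReFQ-is}). The argument would then conclude that $G$ has ReFQ, and symmetrically co-ReFQ, hence CFQ, as the same isomorphism call also distinguishes the non-extending case.
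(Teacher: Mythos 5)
Your proof is correct and follows essentially the same route as the paper, which obtains the proposition directly from the Schreier-lemma construction in the proof of Proposition~\ref{prop:Property-ReFQ-is}: whether $f$ extends reduces to whether $\langle S\mid R,\,x_{1},\dots,x_{k}\rangle$ presents a group isomorphic to $F$, which fails exactly when it presents a strict (hence smaller) quotient of the finite group $F$. The only cosmetic difference is that you phrase the conclusion as ReFQ plus co-ReFQ, whereas the single isomorphism query is a yes/no decision that yields CFQ outright.
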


The isomorphism problem for finite groups is solvable, this is well
known, but it only means that we can determine when two finite groups
given by finite presentations are isomorphic, and the question here
is to determine whether these two groups, one given by a finite presentation,
and the other one by an infinite presentation, are isomorphic. It
can be seen that the finite presentations of finite groups can actually
be omitted in Proposition \ref{prop:isomorphismfinitegroups}. If
$G$ is a re group which does not have CFQ (and such groups exist,
see Theorem \ref{thm:Thm1}), this shows in particular: 
\begin{cor}
There exists a recursive family of recursive presentations of finite
groups, for which the isomorphism problem is unsolvable. Moreover
any two of those presentations differ only by a finite number of relations. 
\end{cor}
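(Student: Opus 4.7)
The plan is to instantiate the modified form of Proposition \ref{prop:isomorphismfinitegroups} to a group $G$ witnessing Theorem \ref{thm:Thm1}, and read off the desired failure of the isomorphism problem on a natural recursive family of recursive presentations.

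First, I would fix $G$ to be a finitely generated residually finite group with solvable word problem but without CFQ, as furnished by Theorem \ref{thm:Thm1}; solvability of the word problem implies that $G$ has a recursive presentation $\langle S\vert R\rangle$. I would then consider the family $\mathcal{P}$ of presentations of the form $\langle S\vert R,R_{1}\rangle$, where $R_{1}$ ranges over finite sets of words in $S\cup S^{-1}$ such that $\langle S\vert R_{1}\rangle$ is finite. Each member of $\mathcal{P}$ is automatically a recursive presentation (the union of a recursive set with a finite set) and presents a finite group (as a quotient of the finite group $\langle S\vert R_{1}\rangle$). Two presentations in $\mathcal{P}$ share the part $R$ and differ only in their finite pieces, so they differ by only finitely many relations, yielding the ``moreover'' clause of the corollary.

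Next, I would check that $\mathcal{P}$ is recursively enumerable. For this I would run through the effective enumeration of $n$-marked finite groups $(F,f)$ introduced at the start of Section~1, and for each such $(F,f)$ use the effective Schreier argument already invoked in the proof of Proposition \ref{prop:Property-ReFQ-is} to compute a finite generating set $R_{1}$ of the kernel of the map $\mathcal{F}_{S}\to F$ induced by $f$. Then $\langle S\vert R_{1}\rangle\simeq F$, the corresponding presentation $\langle S\vert R,R_{1}\rangle$ belongs to $\mathcal{P}$, and every element of $\mathcal{P}$ is reached in this manner.

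Finally, suppose for contradiction that the isomorphism problem were uniformly decidable on $\mathcal{P}$. By the remark preceding the statement, namely that the finite presentations of finite groups can be omitted from Proposition \ref{prop:isomorphismfinitegroups}, this would imply that $G$ has CFQ, contradicting the choice of $G$. The main obstacle this plan leaves implicit is the justification of that remark itself: to discharge it I would revisit the argument of Proposition \ref{prop:Property-ReFQ-is} and, instead of comparing $\langle S\vert R,R_{1}\rangle$ directly with the finite presentation $\langle S\vert R_{1}\rangle$ of $F$, use the combined upper/lower bound procedure described there to reduce the test ``$\langle S\vert R,R_{1}\rangle\simeq F$'' to an isomorphism test between two presentations lying in $\mathcal{P}$ itself.
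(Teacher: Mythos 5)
Your proposal follows the same route as the paper: the paper obtains this corollary in one line by combining Proposition \ref{prop:isomorphismfinitegroups}, the remark that the finite presentations of finite groups can be omitted from it, and the existence (Theorem \ref{thm:Thm1}) of a recursively presented group without CFQ. Your construction of the family $\mathcal{P}$, its recursive enumeration via marked finite groups and Schreier generating sets of the kernels, the observation that each member is a recursive presentation of a finite group, and the verification of the ``moreover'' clause (all members share $R$ and differ only in the finite blocks $R_{1}$) all match what the paper intends. You also correctly identify that the entire content of the statement sits in the remark that the finite presentations can be dropped --- a remark the paper itself asserts with ``it can be seen that'' and never proves.

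The one genuine problem is your proposed discharge of that remark. The ``combined upper/lower bound procedure'' you invoke is the one described after Proposition \ref{prop:Property-ReFQ-is}, and its lower-bound half consists in running the membership algorithm for the finite index normal subgroup, i.e.\ it presupposes ReFQ --- precisely the property you are trying to extract from the isomorphism oracle. As stated, this is circular (the upper-bound half alone only yields a nonincreasing sequence of candidates with no way to certify convergence). A non-circular reduction to isomorphism tests inside $\mathcal{P}$ does exist and is short: with $K=\ker(\mathcal{F}_{S}\rightarrow F)$ and $R_{1}$ a Schreier generating set of $K$, one has $\langle S\vert R,R_{1}\rangle\simeq F/N_{0}$, where $N_{0}$ is the image in $F$ of the normal closure of $R$; the map $f$ fails to extend exactly when $N_{0}\neq1$, i.e.\ exactly when some non-trivial normal subgroup $N'$ of $F$ is contained in $N_{0}$. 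Since $F/(N_{0}N')$ is a quotient of $F/N_{0}$ and finite groups are not isomorphic to their proper quotients, $N'\subseteq N_{0}$ holds if and only if $\langle S\vert R,R_{1}\rangle\simeq\langle S\vert R,R_{1}^{N'}\rangle$, where $R_{1}^{N'}$ is a Schreier generating set of $\ker(\mathcal{F}_{S}\rightarrow F/N')$. Running this test for the finitely many non-trivial normal subgroups $N'$ of $F$ uses only isomorphism queries between members of $\mathcal{P}$ and decides whether $f$ extends, which is the strengthened form of Proposition \ref{prop:isomorphismfinitegroups} that your final contradiction requires.
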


Since it was remarked that it suffices to be able to obtain lower
bounds of the cardinal of the groups given by these presentations
to solve their isomorphism problem, the world problem is not uniformly
solvable for this family of presentation. 

\section{Basic properties }

We will now quickly establish some basic results about the three properties
CFQ, ReFQ and co-ReFQ. 

\subsection{Recursively presented groups}

It is easy to see that re groups have co-ReFQ. 
\begin{prop}
\label{prop:re=00003D>coREFQ}Any recursively presented group has
co-ReFQ. 
\end{prop}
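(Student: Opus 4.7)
The plan is to exploit the recursive enumerability of the set of relations of $G$. Fix a recursive presentation $\langle S \mid R\rangle$ of $G$, so that $R$ is a recursively enumerable set of words in $S\cup S^{-1}$, and let $\tilde f\colon \mathcal{F}_S \to F$ denote the unique homomorphism from the free group on $S$ to $F$ that extends $f$. By the universal property of the presentation, $f$ extends to a homomorphism $G\to F$ if and only if $\tilde f(r)=e_F$ for every $r\in R$.

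Given this reformulation, the co-ReFQ algorithm writes itself. On input $(F,f)$, I would start enumerating the elements of $R$; for each enumerated word $r$, compute $\tilde f(r)$ inside $F$ by successively multiplying the images of the letters of $r$ (a finite computation, since $F$ is given either by its Cayley table or by a finite presentation, and in either case products in $F$ are effectively computable). The procedure halts as soon as a relation $r\in R$ with $\tilde f(r)\neq e_F$ is encountered. By the reformulation above, this happens if and only if $f$ fails to extend, which is precisely co-ReFQ.

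There is really no hard step; the argument is essentially an unwinding of definitions. The one point worth emphasising, which is where the hypothesis of recursive presentability is used, is that although we cannot in general decide whether an arbitrary word of $\mathcal{F}_S$ lies in the normal closure of $R$, we can at least enumerate the defining relators $R$ themselves, and this already suffices to detect non-extensions. No appeal to the full word problem of $G$, or to its finite quotients, is needed.
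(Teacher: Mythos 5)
Your proof is correct and follows essentially the same route as the paper: enumerate the (recursively enumerable) relations, evaluate each one in $F$ via the images of the generators, and halt when one fails, which by von Dyck's theorem happens exactly when $f$ does not extend. The only cosmetic difference is that you enumerate the defining relators $R$ while the paper speaks of enumerating all relations of $G$; both suffice and the argument is the same.
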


\begin{proof}
Let $G$ be a re group generated by $S$ of cardinal $n$. Let $(F,f)$
be a $n$-marked finite group. For any relation $r$ of $G$, write
$r=s_{1}^{\text{\ensuremath{\alpha_{1}}}}...s_{k}^{\text{\ensuremath{\alpha_{k}}}}$
with $\alpha_{i}\in\{-1;1\}$ and $s_{i}\in S$, we again test the
equality $e=f(s_{1})^{\alpha_{1}}...f(s_{k})^{\alpha_{k}}$. Since
we suppose $G$ re, this can be carried out successively on all relations
of $G$. If $f$ does not extend to a morphism, a relation true in
$G$ but not in $F$ will eventually be found. 
\end{proof}
This is remarkable because it is more often the case that algorithmic
problems for groups be naturally re for re groups, than co-re (conjugacy
problem, generalized word problem, isomorphism problem for finitely
presented groups, etc). 

\subsection{Hereditarity }
\begin{prop}
If $G$ and $H$ both have one of CFQ, ReFQ, co-ReFQ, then so does
their free product $G*H$. 
\end{prop}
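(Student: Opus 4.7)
The plan hinges on the universal property of the free product: a function $f : S_G \sqcup S_H \to F$ extends to a homomorphism $G * H \to F$ if and only if its restrictions $f_G := f|_{S_G}$ and $f_H := f|_{S_H}$ extend respectively to homomorphisms $G \to F$ and $H \to F$. Fix finite generating families $S_G$ and $S_H$ for $G$ and $H$, so $S_G \sqcup S_H$ generates $G*H$; since having one of the three properties is independent of the chosen generating family (by the preceding proposition), it suffices to work with this particular one.

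For the CFQ case, given a pair $(F, f)$ we simply invoke the decision algorithm for $G$ on $(F, f_G)$ and the one for $H$ on $(F, f_H)$, and return YES iff both return YES. Since each of the two subroutines terminates on every input, so does the combined procedure, and the universal property guarantees correctness.

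For the ReFQ case, the two semi-decision procedures for $G$ and $H$ terminate exactly when $f_G$ and $f_H$ extend. Run them sequentially (or in parallel, it does not matter): if both eventually halt, we halt and declare that $f$ extends; if at least one fails to extend, the corresponding procedure loops forever, and so does ours. This gives exactly the required behavior. For co-ReFQ, run the two co-ReFQ procedures for $G$ and $H$ in parallel (dovetailing) on $(F, f_G)$ and $(F, f_H)$; halt as soon as either one halts. If $f$ does not extend to $G * H$ then, by the universal property, at least one restriction fails to extend, so the corresponding procedure eventually halts; conversely, if both restrictions do extend, neither procedure halts, and neither does ours.

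There is no real obstacle: the entire content is the universal property together with the elementary observation that a conjunction of decidable (resp.\ r.e., co-r.e.) predicates is decidable (resp.\ r.e., co-r.e.), noting in the co-ReFQ case the need to dovetail rather than run sequentially, since a non-extending restriction might be either of the two.
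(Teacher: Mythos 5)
Your proof is correct and follows essentially the same route as the paper: reduce to the generating family $S_G \sqcup S_H$ via the independence of generating sets, then use the universal property of the free product to reduce each of the three properties to a conjunction of the corresponding properties for $G$ and $H$. The paper leaves the case analysis implicit where you spell it out (including the dovetailing needed for co-ReFQ), but there is no substantive difference.
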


\begin{proof}
Note that we have shown that those properties are independent of the
generating family, thus we can show this using as a generating family
of $G*H$ the union of a generating family of $G$ and of one of $H$.
The proof is then straightforward, as a function from that generating
family to a finite group extends to a morphism of $G*H$, if and only
if both restrictions to $G$ and to $H$ extend as morphisms. 
\end{proof}
\begin{prop}
\label{prop:ReFQ finite index sub}ReFQ is inherited by finite index
subgroups.
\end{prop}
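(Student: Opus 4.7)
The plan is to reduce the question for $H$ to an enumeration of finite quotients of $G$. Fix a finite generating set $t_{1},\dots,t_{m}$ of $H$, each expressed as a word $w_{i}$ in the generators of $G$ (such a set exists and can be effectively exhibited by Schreier's method once a transversal is known, but for this proposition we may simply assume $H$ is presented by any finite generating set whose elements are given as words in the generators of $G$). Given an input pair $(F,f)$ with $f:\{t_{1},\dots,t_{m}\}\to F$, we want an algorithm that terminates exactly when $f$ extends to a homomorphism $H\to F$.

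The key observation is the following equivalence: $f$ extends to a morphism $\varphi:H\to F$ if and only if there exists a morphism $\psi:G\to G'$ onto a finite group $G'$ such that the assignment $\psi(t_{i})\mapsto f(t_{i})$ extends to a well-defined homomorphism $\theta:\psi(H)\to F$. The backward direction is immediate: take $\varphi=\theta\circ(\psi|_{H})$. For the forward direction, suppose $\varphi$ exists; then $\ker\varphi$ has finite index in $H$, hence in $G$, so its normal core $N$ in $G$ has finite index in $G$, and $N\subseteq\ker\varphi$. Taking $\psi:G\to G/N$, the morphism $\varphi$ factors through $\psi(H)=HN/N$, yielding the desired $\theta$.

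From this equivalence the algorithm is straightforward: enumerate the marked finite quotients $(G',\psi)$ of $G$ using the ReFQ algorithm of $G$ (possibly by dovetailing). For each such $\psi$, compute $\psi(t_{i})=w_{i}(\psi(s_{1}),\dots,\psi(s_{n}))$ inside $G'$, form the subgroup $\psi(H)\leq G'$ they generate (a finite computation), and check by inspection of Cayley tables whether $\psi(t_{i})\mapsto f(t_{i})$ defines a homomorphism $\psi(H)\to F$. Halt when such a $\psi$ is found. By the equivalence above, this algorithm terminates precisely when $f$ extends to a morphism $H\to F$, so $H$ has ReFQ.

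The only genuine obstacle is the direction ``extending morphism $\Rightarrow$ existence of a witnessing finite quotient of $G$'', which is settled by the normal-core argument above and uses crucially that $H$ has \emph{finite} index in $G$; everything else is a routine finite bookkeeping on Cayley tables combined with the enumeration furnished by ReFQ of $G$.
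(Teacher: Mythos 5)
Your proof is correct and follows essentially the same route as the paper's: enumerate the finite quotients of $G$ via its ReFQ algorithm, pass to the image of the subgroup, and use the normal core of a finite-index subgroup to show every finite quotient of the subgroup is witnessed this way. The only cosmetic difference is that you phrase the conclusion as a decision procedure for a given pair $(F,f)$ while the paper phrases it as an enumeration of all marked finite quotients of the subgroup; these are the two equivalent formulations of ReFQ given in Section 1.
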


\begin{proof}
Let $G$ be a group, and $N$ a finite index subgroup of $G$. An
enumeration of the finite quotients of $G$ gives an enumeration of
finite quotients of $N$, by restricting the homomorphisms to $N$.
Not all quotients of $N$ need arise this way, but add the following:
whenever a pair $(F,f)$ is found that defines a quotient of $N$,
list all quotients of the finite group $F$, and when a quotient $F\overset{\pi}{\rightarrow}F_{0}$
is found, add $(F_{0},\pi\circ f)$ to the list of quotients of $N$.
We claim that all finite quotients of $N$ arise this way. 

Let $M$ be a finite index normal subgroup of $N$. Then $M$ is of
finite index in $G$, it may not be normal in $G$, but it contains
a normal subgroup $M_{0}$ which is both finite index and normal in
$G$. Then $G\rightarrow G/M_{0}$ restricts to a morphism $N\rightarrow N/M_{0}$,
and $M/M_{0}$ is a normal subgroup in $N/M_{0}$, and the quotient
of $N/M_{0}$ by $M/M_{0}$ is, of course, $N/M$. 
\end{proof}
Note that any countable group embeds in a two generated simple group,
and that simple groups always have CFQ. This of course shows that
CFQ is not inherited by subgroups. Note that the author doesn't know
of a residually finite group with CFQ, with a subgroup without CFQ. 
\begin{problem}
Find a finitely presented residually finite group with a finitely
generated subgroup that does not have CFQ. 
\end{problem}

This problem is relevant in the search of a corrected Higman Embedding
Theorem for residually finite groups, i.e. in the search of necessary
and sufficient conditions for a finitely generated residually finite
group to be a subgroup of a finitely presented residually finite group. 

The following proposition provides insight into what happens when
computing the quotient of a group with CFQ. 
\begin{prop}
\label{prop add relations}Let $G$ be a finitely generated group.
Let $H$ be a group obtained by adding finitely many relations and
identities to $G$. If $G$ has any of ReFQ, co-ReFQ or CFQ, then
so does $H$. 
\end{prop}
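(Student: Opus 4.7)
The plan is to reduce the algorithmic question for $H$ to the corresponding one for $G$. Denote by $r_1,\ldots,r_k$ the added relations (words in the generators $S$ of $G$) and by $w_1,\ldots,w_l$ the added identities. Since adding relations and identities does not enlarge the generating set, a marked finite group $(F,f)$ for $G$ is also a marked finite group for $H$, and I will argue that $f$ extends to a homomorphism $H\to F$ if and only if three conditions hold: (i) $f$ extends to a homomorphism $G\to F$; (ii) for each $i$, the evaluation of the word $r_i$ with each letter replaced by its $f$-image equals the identity of $F$; and (iii) $F$ itself satisfies each identity $w_j$. Condition (iii) can be phrased in terms of $F$ rather than only the image of $f$ because the marked convention forces $f(S)$ to generate $F$.

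The key computational observation is that (ii) and (iii) are uniformly decidable from the data $(F,f)$: condition (ii) is a finite sequence of lookups in the Cayley table, and condition (iii) amounts, for each $w_j$ of arity $m_j$, to checking a universally quantified equation over the finite set $F^{m_j}$.

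Given this, the procedures for $H$ are immediate. In the ReFQ case, I first test (iii) and then (ii); if either fails, I deliberately loop, and otherwise I delegate to the ReFQ algorithm for $G$ on $(F,f)$. This halts exactly when all three conditions hold, i.e.\ when $f$ extends to $H$. In the co-ReFQ case I terminate at once if (iii) or (ii) fails, and otherwise delegate to the co-ReFQ algorithm for $G$; this halts precisely when at least one of the three conditions fails, i.e.\ when $f$ does not extend to $H$. The CFQ case follows by combining the two constructions, or equivalently from the observation that CFQ is the conjunction of ReFQ and co-ReFQ. There is no real obstacle: the argument reduces to the syntactic remark that testing finitely many added relations and checking finitely many laws in a given finite group are decidable tasks, so the extra data cannot raise the complexity of the quotient-recognition problem.
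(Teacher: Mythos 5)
Your proof is correct and follows essentially the same route as the paper's: both reduce the quotient-recognition problem for $H$ to the one for $G$ plus the decidable checks that the finitely many added relators map to the identity of $F$ and that $F$ (equivalently, the subgroup generated by $f(S)$) satisfies the added identities, observing that these extra checks can be performed independently of whether $f$ extends to $G$. The handling of the ReFQ, co-ReFQ and CFQ cases matches the paper's argument.
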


\begin{proof}
Let $G$ and $H$ be two finitely generated groups, with a morphism
$\pi$ from $G$ onto $H$. Let $(F,f)$ be a marked finite group.
It is obvious that if $f$ extends to a homomorphism $\text{\ensuremath{\varphi}}$
from $H$ onto $F$, then $f\circ\pi$ extends as well to a morphism
$\psi$, which, in addition, satisfies $ker(\pi)\subseteq ker(\psi)$.
On the other hand, if $f\circ\pi$ extends to a morphism $\psi$,
and if $ker(\pi)$ is contained in $ker(\psi)$, then $\psi$ factors
through $\pi$ and $f$ will extend to a morphism. The diagram is
the following: 
\[
\begin{array}{ccc}
G & \overset{\pi}{\rightarrow} & H\\
 & \overset{\,\,\,\,\,_{\psi}}{\searrow} & \downarrow^{_{\varphi}}\\
 &  & F
\end{array}
\]
Thus if $G$ has CFQ or ReFQ, we can reduce the finite quotient question
of $H$ to a question about subgroups inclusion. 

If $ker(\pi)$ is finitely generated as a normal subgroup, then the
question ``is $ker(\pi)$ contained in $ker(\psi)$?'' can be solved
in finite time, as it is solved by computing $\psi(r)$ for each $r$
in a generating family of $ker(\pi)$. If $ker(\pi)$ is generated
by an identity -that is a set of relations of the form $v(x_{1},...,x_{k})$,
where $v$ is a element of the free group on $k$ generators, and
$x_{1},...,x_{k}$ take all possible values of $G^{k}$- this question
can also be answered, because to check whether an identity holds in
a finite group, one only needs to check finitely many relations. 

If $G$ is a co-ReFQ group, we can determine whether $ker(\pi)$ is
contained in $ker(\psi)$ even without knowing if $\psi$ defines
a morphism from $G$, and thus if $(F,f)$ does not define a quotient
of $H$, we will either prove that $f\circ\pi$ does not extend to
a quotient of $G$, or that, even if it were to define a quotient,
the inclusion of kernels would not hold. With this, all cases of Proposition
\ref{prop add relations} are covered. 
\end{proof}
Since free groups obviously have CFQ, this proves again that finitely
presented groups have CFQ, and the improvement due to Mostowski \cite{Mostowski1966}
which asserts that groups defined by finitely many relations and identities
have CFQ. This implies for instance that all finitely generated metabelian
groups, while not being necessarily finitely presented, have CFQ,
because a result of Hall (\cite{Hall1954}) implies that any finitely
generated metabelian group can be presented with the metabelian identity
($\forall x\forall y\forall z\forall t\left[\left[x,y\right]\left[z,t\right]\right]=e$)
together with finitely many relations. 

The following is a direct consequence of Proposition \ref{prop add relations}: 
\begin{cor}
The fundamental group of a graph of groups with vertex groups that
have CFQ (or ReFQ or co-ReFQ) and finitely generated edge groups also
has CFQ (respectively ReFQ or co-ReFQ). This includes free products
amalgamated over finitely generated subgroups and HNN extensions over
finitely generated subgroups. 

A direct product of groups that have one of CFQ, ReFQ or co-ReFQ,
again has that property.
\end{cor}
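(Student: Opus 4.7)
The plan is to reduce every construction in the statement to a finite iteration of two building blocks: taking free products (which preserves the property by the previous proposition) and adjoining finitely many relations (which preserves it by Proposition \ref{prop add relations}). Throughout I assume the graph of groups is finite; otherwise the fundamental group need not be finitely generated and the properties CFQ/ReFQ/co-ReFQ would not even be defined.

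First I would dispatch the direct product claim. If $G$ and $H$ are finitely generated by $S_G$ and $S_H$, then $G\times H$ is presented as $G*H$ together with the finitely many commutation relations $[s,t]=e$ for $s\in S_G$, $t\in S_H$. By the preceding free-product proposition, $G*H$ inherits the property from $G$ and $H$; by Proposition \ref{prop add relations}, adding finitely many relations preserves it. Hence $G\times H$ has the property, and the case of a finite direct product follows by iteration.

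For the fundamental group $\Gamma$ of a finite graph of groups with finitely generated edge groups, I would fix a spanning tree $T$ of the underlying graph. Start from the free product $P$ of all the vertex groups, which has the property by iterated use of the free-product proposition. Then form $P * F_r$, where $F_r$ is the free group on one stable letter $t_e$ per edge $e\notin T$; since $F_r$ is finitely presented it has CFQ, so $P*F_r$ still has the property. Finally $\Gamma$ is obtained from $P*F_r$ by adding, for each tree edge $e$ with edge group generated by $h_1,\dots,h_m$, the $m$ amalgamation relations identifying the images of the $h_i$ in the two adjacent vertex groups, and for each non-tree edge $e$ the $m$ conjugation relations $t_e^{-1}\iota_{v_1}(h_i)t_e=\iota_{v_2}(h_i)$. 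Because the graph is finite and each edge group is finitely generated, this amounts to only finitely many new relations in total, so a final application of Proposition \ref{prop add relations} yields the conclusion. Amalgamated free products over a finitely generated subgroup and HNN extensions over a finitely generated subgroup are the one-edge special cases.

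I expect no real obstacle beyond bookkeeping: the content is entirely that the two finiteness hypotheses (finitely many edges and finitely generated edge groups) keep the number of added relations finite at each stage, which is exactly what is needed to invoke Proposition \ref{prop add relations}.
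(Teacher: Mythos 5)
Your argument is correct and is exactly the route the paper intends: the corollary is stated there as a direct consequence of Proposition \ref{prop add relations} together with the free-product proposition, and your write-up simply makes explicit the decomposition (free product of vertex groups, free product with the free group on stable letters, then finitely many amalgamation/conjugation relations) that the paper leaves implicit. Your remark that the graph must be finite for the fundamental group to be finitely generated is a reasonable reading of the hypothesis, not a deviation.
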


\subsection{Groups with the same finite quotients }

Although having CFQ is defined for any finitely generated group, this
property is very much attached to residually finite groups, and not
only because of the interaction between having CFQ and having solvable
word problem. For a group $G$, define its \emph{finitary image }(the
name comes from \cite{Dyson1974}) to be the quotient of $G$ by its
finite residual, i.e. by the intersection of all its finite index
subgroups, or again, the image of $G$ in its profinite completion
$\hat{G}$. Note this group $G_{f}$. It is the biggest residually
finite quotient of $G$. Note $\pi$ the morphism $G\rightarrow G_{f}$.
Fix a finite marked group $(F,f)$. This is the same situation as
described when investigating quotients of CFQ groups, except that
we have the property, which follows from the universal property of
$G_{f}$: any morphism $\psi$ of $G$ to a finite group $F$ factors
through $\pi$, that is for any morphism $\psi$ to a finite group,
$ker(\pi)\subseteq ker(\psi)$. The situation is summed up in the
diagram: 
\[
\begin{array}{ccc}
G & \overset{\pi}{\rightarrow} & G_{f}\\
 & \overset{\,\,\,\,\,_{\psi}}{\searrow} & \downarrow^{_{\varphi}}\\
 &  & F
\end{array}
\]

It follows that $G$ has any of CFQ, ReFQ, co-ReFQ if and only if
$G_{f}$ shares the same property. From this, the following proposition
is obvious. 
\begin{prop}
\label{prop: same quotients}If two finitely generated groups $G$
and $H$ admit the same finitary image, then $G$ has any of CFQ,
ReFQ, co-ReFQ if and only if $H$ shares the same property. 
\end{prop}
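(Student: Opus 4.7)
The plan is to reduce the statement for $G$ and $H$ to their finitary images via the commutative diagram that appears just before the statement, and then to invoke independence of the property from the generating set. First I would fix a finite generating set $S$ of $G$ and observe that the universal property of $G_{f}$ forces every morphism $\psi\colon G\to F$ to a finite group to factor through $\pi\colon G\to G_{f}$. This immediately gives a bijection between the marked finite quotients of $G$ relative to $S$ and the marked finite quotients of $G_{f}$ relative to $\pi(S)$, and this bijection is the identity on the combinatorial data $(F,f)$. Consequently the set $\mathcal{A}_{G}$ introduced in Section~1.1 coincides with $\mathcal{A}_{G_{f}}$ for these choices of generating sets, so $G$ has CFQ (respectively ReFQ, co-ReFQ) exactly when $G_{f}$ does. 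The same argument applies verbatim to $H$ and $H_{f}$.

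Next, assuming $G_{f}\cong H_{f}$, I would use such an isomorphism to transport a finite generating set of $H_{f}$ to $G_{f}$. Now $G_{f}$ carries two finite generating sets, its own $\pi(S)$ and the transported one, and the independence of CFQ, ReFQ, co-ReFQ from the generating set, established in the first subsection, immediately gives that $G_{f}$ satisfies the property with respect to one generating set iff it does for the other. Chaining the three equivalences $G\leftrightarrow G_{f}\leftrightarrow H_{f}\leftrightarrow H$ then concludes the argument.

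The only step that might seem delicate is the last chain link, because the isomorphism $G_{f}\simeq H_{f}$ need not be effective. However, the proof given earlier of independence of the generating set never requires an algorithm that \emph{finds} the expressions of one generating family in terms of the other; it only needs that such expressions exist, and they can be hard-coded into the algorithm as a finite table. So no additional effectiveness hypothesis is needed, and the proposition should reduce to a short, essentially formal argument built out of material already in the paper.
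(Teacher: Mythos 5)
Your proposal is correct and follows the same route as the paper: the factorization of every finite quotient of $G$ through $\pi\colon G\to G_{f}$ identifies the marked finite quotients of $G$ with those of $G_{f}$, and the paper then declares the proposition "obvious," which amounts exactly to your transport of generating sets along the isomorphism $G_{f}\simeq H_{f}$ combined with the already-established independence from the generating family. Your remark that the isomorphism need not be effective, since the rewriting expressions are merely hard-coded, correctly addresses the only subtlety the paper leaves implicit.
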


We say that a group has \emph{trivially computable finite quotients}
if its finitary image is a finite group. For instance, simple groups,
or finite exponent groups (because of the solution to the restricted
Burnside problem), all have trivially CFQ. Note again that since a
finitely generated simple group can be neither re nor co-re, CFQ groups
can have as bad algorithmic properties as desired. Obtaining groups
with CFQ that still have bad algorithmic properties becomes challenging
only amongst residually finite groups. This is what was obtained in
Theorem \ref{thm:THM2}. 

In \cite{Dyson1974} are constructed two groups with solvable word
problem, the finitary image of one is co-re but not re, while the
finitary image of the second is re but not co-re. This last group
cannot have CFQ, otherwise its finitary image would have CFQ while
being residually finite, thus it would be co-re. Thus this is a first
example of a group with solvable word problem but without CFQ. Note
however that this construction relies on the fact that the constructed
group is not residually finite in order to prove that it does not
have CFQ; while Theorem \ref{thm:Thm1} provides a group with solvable
word problem and without CFQ that is itself residually finite. 

\section{Relation with the Depth Function for residually finite groups\label{sec:Relation-with-the}}

In \cite{Bou-Rabee2010}, Bou-Rabee introduced the\emph{ residual
finiteness growth function}, or \emph{depth function}, $\rho_{G}$,
of a residually finite group $G$. We briefly recall its definition. 

Fix a generating family $S$ of $G$. Consider the function $\rho_{S}$,
that to a natural number $n$ associates the smallest number $k$
such that, for any non-trivial element of length at most $n$ in $G$,
there exists a finite quotient of $G$ of order at most $k$, such
that the image of this element in that quotient is non-trivial. This
is the depth function of $G$ with respect to the generating family
$S$. 

For two functions $f$ and $g$ defined on natural numbers, note $f\succeq g$
to mean that there exists a constant $C$ such that for any number
$n$, $f(n)\geq Cg(Cn)$. It is easy to see that one defines an equivalence
relation $\simeq$ by putting $f\simeq g$ if and only if $f\succeq g$
and $g\succeq f$. 

If $S$ and $S'$ are two different generating families of a group
$G$, the functions $\rho_{S}$ and $\rho_{S'}$ will satisfy $\rho_{S}\simeq\rho_{S'}$
(\cite{Bou-Rabee2010}). Thus one can define uniquely the depth function
$\rho_{G}$ of the group $G$ by considering the equivalence class
for $\simeq$ of a function $\rho_{S}$, for some generating family
$S$. 

The interaction between the depth function of a group and it having
CFQ makes it worth mentioning here, and in fact, an ancestor of the
depth function can be found in McKinsey's original article, \cite{McKinsey1943},
where an upper bound for what would be a ``depth function'' for
lattices (partially ordered sets) is computed. This interaction also
appears in \cite{Bou-Rabee2016}. 

Consider a residually finite group $G$, that has CFQ, generated by
a family $S$. We know that $G$ is then co-re, because McKinsey's
algorithm applies: list all quotients of $G$ in order, and check
wether an element has non-trivial image in one of those quotients.
How long this will take is bounded by the depth function. In particular,
if for an element $w$ of length $n$ of $G$, the algorithm has already
tested all quotients of size at most $\rho_{S}(n)$, and not found
a quotient in which $w$ is non-trivial, then $w=e$. 
\begin{prop}
\label{Prop depth dehn}Let $G$ be a finitely generated residually
finite group with CFQ.

If there exists a generating family $S$ of $G$ and a recursive function
$h$ that satisfies $\rho_{S}\leq h$, then $G$ has solvable word
problem. 

If $G$ has solvable word problem, then for any generating family
$S$ of $G$, the depth function of $G$ with respect to $S$ is recursive. 
\end{prop}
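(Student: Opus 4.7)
The plan is to treat the two implications separately. Both are direct applications of the McKinsey-type enumeration developed in Section~\ref{subsec:Variations-on-McKinsey's}, combined with the definition of the depth function.

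For the first statement, suppose $S$ is a generating family of $G$ and $h$ is a recursive function with $\rho_S \leq h$. Given a word $w$ on $S \cup S^{-1}$ of length $n$, I would first compute $h(n)$, then enumerate the finitely many marked finite groups $(F_i,f_i)$ with $|F_i| \leq h(n)$. For each such pair, invoke the CFQ algorithm to decide whether $f_i$ extends to a morphism $\hat{f}_i : G \to F_i$, and, when it does, test whether $\hat{f}_i(w) = e_{F_i}$. Output $w=e$ if every extension sends $w$ to the identity, and $w \neq e$ as soon as one extension sends $w$ to a non-identity element. Correctness is immediate from the definition of $\rho_S$: when $w \neq e$, residual finiteness together with $\rho_S(n) \leq h(n)$ guarantees the existence, among the pairs tested, of a quotient in which $w$ has non-trivial image; when $w=e$, no quotient witnesses anything. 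Termination is clear because both the list of pairs and each CFQ query are finite.

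For the second statement, assume $G$ has solvable word problem, fix a generating family $S$, and take an input $n$. I would list the finitely many words of length at most $n$ and use the word problem algorithm to retain only those representing non-trivial elements. For each such non-trivial $w$, I compute the quantity $\kappa(w)$ defined as the smallest order of a finite quotient of $G$ in which $w$ maps to a non-identity element: enumerate marked finite groups $(F_i,f_i)$ in non-decreasing order of $|F_i|$, use CFQ to test whether each $f_i$ extends to a morphism, and stop at the first pair for which the extension sends $w$ to a non-identity element. Residual finiteness ensures termination, and CFQ ensures every step is effective. The output is $\rho_S(n) = \max_w \kappa(w)$, the maximum being taken over the finitely many non-trivial words of length at most $n$.

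The main obstacle is conceptual rather than technical: both directions hinge on the fact that CFQ supplies a decidable criterion for recognizing which pairs $(F,f)$ in the universal enumeration of marked finite groups actually correspond to quotients of $G$. Once this is available, the bound on the quotient order given by $h$ (for the first part) or obtained from the word problem together with residual finiteness (for the second part) reduces the computation of either the word problem or $\rho_S(n)$ to an effective finite search, and no further ingredients are needed.
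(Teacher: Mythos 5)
Your proposal is correct and follows essentially the same route as the paper: the first part is exactly the McKinsey-type bounded search described in the sentence preceding the proposition (test all quotients of order at most $h(n)$, using CFQ to recognize them, and conclude $w=e$ if none detects $w$), and the second part is the computation the paper dismisses as straightforward, namely using the word problem to isolate the non-trivial words of length at most $n$ and then searching, via CFQ and residual finiteness, for the smallest detecting quotient of each. No gaps.
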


\begin{proof}
The first claim follows from the sentence that immediately precedes
this proposition. The second claim is straightforward. 
\end{proof}
\begin{rem}
The depth function $\rho_{G}$ of $G$ is defined up to the equivalence
relation $\simeq$. In the statement of this last proposition appears
the condition ``there exists a recursive function $h$ that satisfies
$\rho_{S}\leq h$''. If two functions $f$ and $g$ satisfy $f\simeq g$,
and if there exists a recursive function $h$ that satisfies $f\leq h$,
then of course there exists a recursive function $h'$ that satisfies
$g\le h'$, and thus it makes sense to say that there exists a recursive
function that bounds the depth function $\rho_{G}$. Note however
that it is possible for a recursive function $f$ to be equivalent,
for $\simeq$, to a non-recursive function $g$, (and even worse:
the equivalence class for $\simeq$ of any recursive function contains
a non-recursive function), and because of this, the term ``recursive
depth function'', must be manipulated carefully. In what follows,
we will say that a group $G$ has \emph{recursive depth function}
if for any generating family $S$ of the group $G$, the function
$\rho_{S}$ is recursive. Proposition \ref{Prop depth dehn} implies
that for a group with CFQ, either all its relative depth functions
$\rho_{S}$ are recursive, or none of them are. From this it is natural
to ask the following problem: 
\begin{problem}
Let $G$ be a finitely generated residually finite group, and $S$
and $S'$ two generating families of $G$. Is it possible that exactly
one of $\rho_{S}$ and $\rho_{S'}$ be recursive?
\end{problem}

\end{rem}

The following follows immediately from Proposition \ref{Prop depth dehn}:
\begin{cor}
For residually finite groups with CFQ, having solvable word problem
is equivalent to having recursive depth function. 
\end{cor}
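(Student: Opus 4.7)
The corollary is a direct two-way application of Proposition \ref{Prop depth dehn}, so my plan is essentially to unpack the definition of \emph{recursive depth function} given in the preceding remark and feed each direction into the appropriate half of that proposition.

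For the forward direction, assume $G$ is residually finite, has CFQ, and has solvable word problem. Fix any generating family $S$ of $G$. Then the second clause of Proposition \ref{Prop depth dehn} gives immediately that $\rho_S$ is recursive. Since $S$ was arbitrary, $G$ has recursive depth function in the sense of the remark.

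For the reverse direction, assume $G$ has recursive depth function. By definition this means that for \emph{every} generating family $S$ the function $\rho_S$ is recursive; in particular, picking any one such $S$ and setting $h = \rho_S$, we obtain a recursive function $h$ with $\rho_S \le h$. The first clause of Proposition \ref{Prop depth dehn} then yields that $G$ has solvable word problem.

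There is no real obstacle here; the only subtlety worth flagging is the one already emphasised in the remark, namely that ``recursive depth function'' has to be read as a statement about every $\rho_S$ rather than as a statement about the $\simeq$-class $\rho_G$, since an equivalence class for $\simeq$ may contain both recursive and non-recursive representatives. Once that convention is adopted, the corollary is literally the conjunction of the two implications already proved.
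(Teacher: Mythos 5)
Your proof is correct and matches the paper's intent exactly: the paper gives no separate argument, stating only that the corollary ``follows immediately from Proposition \ref{Prop depth dehn}'', and your unpacking of the two clauses of that proposition (together with the definition of \emph{recursive depth function} from the preceding remark) is precisely that immediate deduction. The subtlety you flag about reading ``recursive depth function'' as a statement about every $\rho_S$ rather than about the $\simeq$-class is the same point the paper's remark emphasises.
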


Theorem \ref{thm:THM2} thus asserts the existence of a group with
CFQ but non-recursive depth function. Note that result similar to
this corollary exists, that relates the Dehn function of a finitely
presented group and solvability of the word problem in it. 

In \cite{Kharlampovich2017}, it was shown that for any recursive
function $f$, there is a finitely presented residually finite group
with depth function greater than $f$, and yet with word problem solvable
in polynomial time. For such a group, McKinsey's algorithm is far
from being optimal. The group constructed in order to prove Theorem
\ref{thm:Thm1} shows that, for non-finitely presented groups, the
situation can be even more extreme: in it, the word problem is solvable,
but not by McKinsey's algorithm.

It was explained in the first section of this paper that conjugacy
separability and subgroup separability, when confronted to the property
of having computable finite quotients, play a role similar to residual
finiteness, but with respect to the conjugacy problem and the generalized
word problem. It is then natural to introduce functions similar to
the depth function of residually finite groups, but that quantify
conjugacy separability and subgroup separability. This was first done
in \cite{Lawton2017} for conjugacy separability, and in \cite{Dere2018}
for subgroup separability. What was said in this section about recursiveness
of the depth function translates easily to those two functions. 

\section{Examples of groups with CFQ }

In this section, we give examples of infinitely presented groups that
have CFQ. Those examples contain some commonly used infinitely presented
groups. 

\subsection{Wreath products }

The (restricted) wreath product of two groups $G$ and $H$, noted
$H\wr G$, is the semi direct product $H\ltimes\underset{H}{\bigoplus}G$,
where $H$ acts on $\underset{H}{\bigoplus}G$ by permuting the indices.
The wreath product $H\wr G$ of two finitely generated groups is always
finitely generated, however, by a theorem of Baumslag (\cite{Baumslag1961}),
it is finitely presented only if $G$ is finitely presented and $H$
is finite (excluding the case where $G$ is the trivial group). If
$\langle S_{G}\vert R_{G}\rangle$ and $\langle S_{H}\vert R_{H}\rangle$
are presentations respectively of $G$ and $H$, with $S_{G}\cap S_{H}=\varnothing$,
then a presentation of $H\wr G$ is given by the following: 
\[
\langle S_{G},S_{H}\vert R_{G},R_{H},\left[hg_{1}h^{-1},g_{2}\right],(g_{1},g_{2},h)\in S_{G}\times S_{G}\times(H\setminus\left\{ 1\right\} )\rangle
\]
(where by $h\in H\setminus\left\{ 1\right\} $ we actually mean ``for
each $h\in H\setminus\left\{ 1\right\} $, choosing a way of expressing
$h$ in terms of a product of the generators of $S_{H}$''). 

The following proposition gives necessary and sufficient conditions
for a wreath product to have CFQ. 
\begin{prop}
The wreath product $H\wr G$ of two finitely generated groups $G$
and $H$ has CFQ if and only if either $H$ has CFQ and is infinite,
or if $H$ is finite and $G$ has CFQ. 
\end{prop}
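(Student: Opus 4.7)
The plan is to prove both implications by splitting on whether $H$ is finite or infinite. The key feature of the standard presentation of $H \wr G$ is the family of commutation relations $[hg_1h^{-1}, g_2] = e$ with $h \in H \setminus \{1\}$ and $g_1, g_2 \in S_G$: this family is finite when $H$ is finite but infinite when $H$ is infinite, and its behavior in finite quotients accounts for the two distinct regimes appearing in the statement.

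For the forward implication, I first observe that $H$ is the quotient of $H \wr G$ by the normal closure of the finite set $S_G$ (which is exactly the base $\bigoplus_H G$), so $H$ is obtained from $H \wr G$ by adding finitely many relations, and Proposition \ref{prop add relations} yields CFQ of $H$. When $H$ is in addition finite I then show that $G$ inherits CFQ: given a marked finite group $(F_G, f_G)$ of $G$, I form the marked finite group $(H \wr F_G, f)$ of $H \wr G$ by placing $f_G$ on the identity copy of $G$ and leaving the generators of $H$ fixed. The assumed CFQ algorithm for $H \wr G$ decides whether $f$ extends, which is equivalent to $f_G$ extending on $G$: one direction by restricting the extension to the identity copy, the other by inducing a morphism of wreath products diagonally from the extension of $f_G$.

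For the reverse implication with $H$ finite, the set $H \setminus \{1\}$ is finite, so $H \wr G$ is obtained from the free product $H * G$ by adding finitely many commutation relations. Both $H$ (being finite) and $G$ have CFQ, so their free product does as well, and Proposition \ref{prop add relations} transfers CFQ to $H \wr G$. For the reverse implication with $H$ infinite and of CFQ, I describe an algorithm that on input $(F, f)$ carries out four decidable checks: (i) the subgroup $\langle f(S_G)\rangle$ of $F$ is abelian; (ii) $f|_{S_H}$ extends to a morphism $H \to F$, via CFQ of $H$; (iii) for every $y \in \langle f(S_H)\rangle$ and all $g_1, g_2 \in S_G$, $[yf(g_1)y^{-1}, f(g_2)] = e$ in $F$; (iv) $f|_{S_G}$ extends to a morphism $G \to F$. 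Correctness goes as follows: if an extension $\phi : H \wr G \to F$ exists, then $\phi|_H$ is a morphism from an infinite group to a finite group, so its kernel is infinite and every $y$ in its image has a preimage containing some $h \neq 1$; this makes (iii) the effective content of the infinite family of commutation relations, and the case $y = 1$ in (iii) also produces the abelianness demanded by (i). Conversely, if (i)-(iv) hold, every defining relation of $H \wr G$ is satisfied by $f$, so $f$ extends.

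The main obstacle is check (iv), which at first glance requires CFQ of $G$, not provided by the hypothesis when $H$ is infinite. The resolution is that (i) guarantees the image of $f|_{S_G}$ is abelian, so (iv) reduces to testing whether $f|_{S_G}$ extends to a morphism $G^{ab} \to F$. Now $G^{ab}$ is a finitely generated abelian group, hence finitely presented, hence automatically has CFQ by McKinsey's theorem. Because CFQ is defined as mere existence of a deciding algorithm and $G^{ab}$ is fixed once $G$ is, the required subroutine exists, even though one might be unable to produce it explicitly from an arbitrary description of $G$; combining it with the other three checks yields the algorithm witnessing CFQ of $H \wr G$.
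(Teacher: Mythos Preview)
Your proof is correct and follows essentially the same approach as the paper. The forward direction and the reverse direction for finite $H$ are handled identically; for infinite $H$, both arguments hinge on the observation that the image of $G$ in any finite quotient must be abelian (since some nontrivial $h\in H$ dies), the paper packaging this as a reduction to $H\wr G_{ab}$ followed by a check against the finitely presented group $F_H\wr G_{ab}$, while you verify the defining relations of $H\wr G$ directly via your checks (i)--(iv) and reduce (iv) to $G^{ab}$ --- the same computation, organized slightly differently.
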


\begin{proof}
Suppose that $H\wr G$ has CFQ. From the presentation of $H\wr G$
given above, it is easy to see that $H$ can be obtained as a quotient
of $H\wr G$ by adding only finitely many relations to it. Thus by
Proposition \ref{prop add relations}, $H$ has CFQ. 

Suppose additionally that $H$ is finite. A finite group $F$, marked
by a function $f$, is a quotient of $G$ if and only if the function
$g$, defined as the identity on the generators of $H$ and as $f$
on those of $G$, which thus sends a generating family of $H\wr G$
to one of $H\wr F$, can be extended to a morphism. But because $H$
is finite, $H\wr F$ is also finite, and thus it can be determined
whether or not it is a quotient of $H\wr G$. This proves that $G$
has CFQ. 

For the converse, suppose first that $H$ is finite and that $G$
has CFQ. Because $H$ is finite, a presentation for $H\wr G$ can
be obtained by adding finitely many relations to a presentation of
$G*H$, which has CFQ because both $G$ and $H$ have. Thus Proposition
\ref{prop add relations} applies again. 

Suppose now that $H$ is infinite and has CFQ. In a finite quotient
of $H\wr G$, some non-trivial element of $H$ must necessarily have
a trivial image. But it is easy to see, from the presentation of $H\wr G$
given above, that, for any element $h\ne1$ of $H$, the relation
$h=1$ together with the relations of $H\wr G$ will always also imply
that $\left[g_{1},g_{2}\right]=1$, for any pair $(g_{1},g_{2})$
of elements of $G$. This implies that any finite quotient of $H\wr G$
is in fact a finite quotient of $H\wr G_{ab}$, where $G_{ab}$ denotes
the abelianization $G/\left[G,G\right]$ of $G$. It thus suffices
to prove that if $H$ has CFQ and $G$ is abelian, then the wreath
product $H\wr G$ also has CFQ. 

Consider a finite group $F$ together with a function $f$ that goes
from a generating family of $H\wr G$ to $F$. To decide whether $f$
can be extended to a morphism, one can first check whether the restriction
$f_{H}$ of $f$ to a generating family of $H$ extends as a morphism
from $H$ onto a subgroup $F_{H}$ of $F$, using the fact that $H$
has CFQ. If it does not, then $f$ cannot be extended to a group morphism.
If it does, one can compute a presentation of $F_{H}$. $F$ is then
a quotient of $H\wr G$ if and only if it is a quotient of $F_{H}\wr G$.
But $G$ is finitely presented, being abelian, and $F_{H}$ is finite,
thus this wreath product is finitely presented, and a finite presentation
of it can be obtained from the presentation of $F_{H}$. This presentation
can in turn be used to decide whether $F$ is a quotient of $F_{H}\wr G$,
and thus of $H\wr G$. 
\end{proof}

\subsection{L-presented groups }

L-presented groups were introduced in \cite{Bartholdi2003}. An L-presentation,
or finite endomorphic presentation, is a quadruple $\langle S\vert R_{1}\vert\varPhi\vert R_{2}\rangle$,
where $S$ is a finite set of generating symbols, $R_{1}$ and $R_{2}$
are finite sets of relations, i.e. of elements of the free group $\mathcal{F}_{S}$
defined over $S$, and $\varPhi$ is a finite set of endomorphisms
of $\mathcal{F}_{S}$. Such a presentation defines a presentation,
in the usual sense of the term, by adding to $R_{1}\cup R_{2}$, as
further relations, all elements of $\mathcal{F}_{S}$ that can be
obtained from $R_{2}$ by iterating the endomorphisms of $\varPhi$.
(This process can be carried out effectively, an L-presentation is
thus always a recursively enumerable presentation.) 

It was proven in \cite{HARTUNG2011} that a coset enumeration process
can be carried out in L-presented groups, and that the membership
problem for finite index subgroups is solvable in L-presented groups.
This, we have seen, directly implies that those groups have CFQ. 
\begin{prop}
L-presented groups have CFQ. 
\end{prop}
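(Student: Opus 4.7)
The plan is to reduce this to the characterization of CFQ in terms of the membership problem for finite index normal subgroups, already established in the previous section, and then invoke the external result of Hartung et al.\ cited from \cite{HARTUNG2011}.

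First I would observe that any L-presentation $\langle S\vert R_{1}\vert\varPhi\vert R_{2}\rangle$ is, by construction, a recursively enumerable presentation: one effectively enumerates $R_{1}$ together with all iterated images of elements of $R_{2}$ under finite compositions of the endomorphisms in $\varPhi$. Hence L-presented groups are re groups, and the corollary following Proposition \ref{prop:Property-ReFQ-is} applies: for such a group, CFQ is equivalent to solvable membership problem for finite index normal subgroups.

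Next I would invoke the theorem of Hartung (\cite{HARTUNG2011}), which provides a coset enumeration algorithm for L-presented groups and, as a consequence, decides the membership problem for arbitrary finite index subgroups given by a finite generating family. Since the problem for finite index normal subgroups is the restriction of this problem to those subgroups that happen to be normal (normality playing no role in the input format, only in the promise), solvability of the general finite index membership problem trivially implies solvability of the finite index normal membership problem. Combining this with the equivalence recalled in the previous paragraph yields CFQ.

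The only real content is the result of \cite{HARTUNG2011}, which is what I would expect to be the main obstacle if one wanted a self-contained proof; but since it is cited, the argument collapses to a one-line chain of implications, and no additional work seems to be required.
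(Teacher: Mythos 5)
Your proof is correct and follows essentially the same route as the paper: the paper likewise notes that an L-presentation is a recursively enumerable presentation, cites \cite{HARTUNG2011} for solvability of the membership problem for finite index subgroups, and concludes via the equivalence between CFQ and solvable membership problem for finite index normal subgroups in re groups.
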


Many groups of interest that are not finitely presented were proven
to admit L-presentations, including groups of intermediate growth,
and various groups acting on rooted trees, like the Gupta--Sidki
group (see \cite{Bartholdi2003}). 

Note that many of those groups were proven to be conjugacy separable
(\cite{Wilson1997}) or subgroup separable (\cite{GRIGORCHUK2003,Garrido2016}).
The fact that they have CFQ thus completes for those groups the proof
of solvability of the conjugacy problem, or of the generalized word
problem. (Although in some cases, solutions to these problems are
known, that do not rely on McKinsey's algorithm). 

\section{Main unsolvability results}

In this section, we prove the two main theorems of this paper.

Theorem \ref{thm:THM2} follows from the construction that Slobodskoi
used in \cite{Slobodskoi1981} to prove undecidability of the universal
theory of finite groups. However, the exact result we use was implicit
in \cite{Slobodskoi1981}, and it was first pointed out to hold by
Bridson and Wilton in \cite{Bridson2015}. We first show how, thanks
to this result, the proof of Theorem \ref{thm:THM2} is immediate. 

We then detail Dyson's construction from \cite{Dyson1974}, which,
with \cite{Meskin1974}, contained the first examples of re residually
finite groups without solvable word problem. We show that this construction
provides us with another proof of Theorem \ref{thm:THM2}, which has
the advantage of relying on groups whose structure is very explicit.
We finally use Dyson's construction to build a residually finite group
with solvable word problem, and yet without CFQ, thus proving Theorem
\ref{thm:Thm1}. 

\subsection{Slobodskoi's example for Theorem \ref{thm:THM2}}

Recall that the finitary image $G_{f}$ of a group $G$ is its biggest
residually finite quotient. 

To each finite presentation, one can associate a residually finite
group, by taking the finitary image of the group defined by this presentation.
If $\pi$ is a finite presentation for a group $G$, and if $G_{f}$
is the finitary image of $G$, say that $\pi$ is a finite presentation
of $G_{f}$ \emph{as a residually finite group, }and that $G_{f}$
is\emph{ finitely presentable as a residually finite group.} 

In 1981, in \cite{Slobodskoi1981}, Slobodskoi proved that the universal
theory of finite groups is undecidable. As a consequence of his paper,
groups that are finitely presented\emph{ }as residually finite groups\emph{
}do not have uniformly solvable word problem. But in fact, more can
be deduced from the proof that appears in \cite{Slobodskoi1981},
as was made explicit by Bridson and Wilton in \cite{Bridson2015}: 
\begin{thm}
[\cite{Bridson2015}, Theorem 2.1]\label{thm: Slobodskoi}There
exists a finitely presented group $G$ in which there is no algorithm
to decide which elements have trivial image in every finite quotient. 
\end{thm}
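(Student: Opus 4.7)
The plan is to leverage the co-r.e.\ character of profinite triviality: for any finitely presented group $G$, the set of words with some non-trivial image in a finite quotient is recursively enumerable, since (as shown earlier in the paper) finite quotients of $G$ can be enumerated by McKinsey's procedure, and nontriviality of an image in a fixed finite group is checkable. Undecidability of profinite triviality therefore amounts to producing a finitely presented group whose \emph{finite residual} fails to be recursively enumerable. I would establish this by constructing a uniform many-one reduction from a co-r.e.\ but not r.e.\ problem, taking the non-halting problem for a universal Turing machine $U$ as the canonical choice.

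The core of the construction is to build a finite presentation $\langle S \mid R\rangle$ in which Turing machine configurations are encoded by group elements, and the transition function of $U$ is encoded by the relations in $R$. For each input $n$, one produces (recursively in $n$) a word $w_n$ expressing the statement ``$U$ started on $n$ has reached a halting configuration''. The objective is the equivalence: $w_n$ has trivial image in every finite quotient of $G$ if and only if $U$ does not halt on $n$. Granting this, an algorithm deciding profinite triviality in $G$ would decide the halting problem, which is impossible.

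The easy half of the equivalence is that a halting computation of $U$ on $n$ yields, via the explicit finite trace of configurations, an explicit homomorphism onto a finite group in which $w_n$ is nontrivial; the finite state space of the halted computation itself serves as the witnessing quotient. The main obstacle is the converse: showing that \emph{every} finite quotient of $G$ trivializes $w_n$ whenever $U$ fails to halt on $n$. Naive encodings of Turing machines (along the lines of Novikov--Boone) produce groups whose finite quotients are wildly unconstrained, and spurious quotients can accidentally witness $w_n \ne 1$ even for non-halting $n$. Slobodskoi's achievement is the careful design of a presentation whose finite quotients are confined to a well-understood family of approximations (built from central-by-metabelian and nilpotent pieces in the spirit of Meskin's construction), so that only genuine halting computations can produce the required witnesses.

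Given that the author cites this result directly from Bridson--Wilton rather than reproving it, my plan is not to devise a novel argument but to reconstruct Slobodskoi's strategy: first prove undecidability of the universal theory of finite groups by interpreting Turing machine halting as a universal word-equation implication, and then observe, as Bridson--Wilton do, that Slobodskoi's reduction in fact takes place inside the profinite word problem of a single finitely presented group rather than quantifying over all finite groups. The delicate step throughout is the bookkeeping guaranteeing that all finite quotients of the ambient $G$, not merely the expected ones, respect the encoded computation.
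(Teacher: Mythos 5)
Your outline matches the paper's treatment: the paper does not prove this theorem but cites it verbatim from Bridson--Wilton, who extract it from Slobodskoi's construction, and your reformulation of the statement as the failure of the finite residual to be recursively enumerable, together with the reduction from the halting problem and the identification of the genuinely hard direction (controlling \emph{all} finite quotients of the encoding group, not just the intended ones), is an accurate description of how that cited result is actually established. Like the paper, you ultimately defer the technical core to Slobodskoi, and your preliminary observation --- that finitely presented groups have recursively enumerable finite quotients, so only the co-r.e.\ half of profinite triviality can fail --- is correct and is exactly the reduction the paper uses implicitly when it restates the theorem as ``there exists a finitely presented group whose finitary image is not recursively presented.''
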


Two other equivalent formulations of this theorem are the following
ones:
\begin{thm*}
There exists a finitely presented group $G$ whose finitary image
is not recursively presented. 
\begin{thm*}
There exists a group $H$ which is finitely presentable as a residually
finite group, but which has unsolvable word problem. 
\end{thm*}
\end{thm*}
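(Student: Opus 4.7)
The plan is to derive the statement directly from Theorem \ref{thm: Slobodskoi} by taking $H = G_f$, where $G$ is the finitely presented group supplied by that theorem. Since $G_f$ is by construction residually finite and is the finitary image of the finitely presented group $G$, it is, by the definition introduced just above the statement, finitely presentable as a residually finite group. So the only thing to prove is that the word problem in $H$ is unsolvable.

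To do that, I would first fix a finite generating set $S$ of $G$ and use the images $\pi(S)$, where $\pi\colon G \to G_f$ is the canonical projection, as a finite generating set for $H$. Given a word $w$ in the letters of $S \cup S^{-1}$, I want to argue that the following four conditions are equivalent: (a) $w$ represents the identity in $H$; (b) $\pi(w) = 1$ in $G_f$; (c) $w$ lies in the finite residual of $G$, i.e.\ in the intersection of all finite index normal subgroups of $G$; (d) $w$ has trivial image in every finite quotient of $G$. The equivalence (a)$\Leftrightarrow$(b) is the choice of generating set, (b)$\Leftrightarrow$(c) is the definition of the finitary image as $G/\bigcap N$, and (c)$\Leftrightarrow$(d) is immediate from the definition of the finite residual. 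Theorem \ref{thm: Slobodskoi} asserts precisely that property (d) cannot be decided algorithmically from the datum of $w$, so the word problem in $H$ is unsolvable.

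The only step that needs any care is checking the equivalence of the word problem in $H$ with the Bridson--Wilton decision problem in $G$; this is really a bookkeeping exercise about the definitions of finite residual and finitary image recalled earlier in this subsection, but it is worth spelling out so as not to conflate ``trivial in some finite quotient'' with ``trivial in every finite quotient''. Aside from that, there is no substantive obstacle: the statement is a direct rephrasing of Theorem \ref{thm: Slobodskoi} under the dictionary \emph{finitely presented group in which the finite residual is not recursive} $\longleftrightarrow$ \emph{finitely presentable residually finite group without solvable word problem}.
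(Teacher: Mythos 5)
Your argument for the second formulation is correct and is exactly the route the paper intends: the paper presents both statements as immediate reformulations of Theorem \ref{thm: Slobodskoi}, and your chain (a)$\Leftrightarrow$(b)$\Leftrightarrow$(c)$\Leftrightarrow$(d), with $H=G_f$ generated by $\pi(S)$, is the bookkeeping that makes the reformulation precise. That part needs no correction.

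However, the statement you were asked to prove consists of \emph{two} assertions, and you only address the second. The first one --- that the finitary image $G_f$ of some finitely presented group is not \emph{recursively presented} --- does not follow from the unsolvability of the word problem in $G_f$ alone; a priori a group can have unsolvable word problem while still being recursively presented (its set of trivial words r.e.\ but not recursive). The missing ingredient is that $G_f$ is residually finite and inherits CFQ from the finitely presented group $G$ (Proposition \ref{prop: same quotients}), so by McKinsey's argument (Proposition \ref{Prop MCKINSEY}) the set of words trivial in $G_f$ is automatically co-r.e.; equivalently, the finite residual of $G$ is co-r.e.\ because the finite quotients of $G$ can be enumerated. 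If $G_f$ were in addition recursively presented, its word problem would then be solvable, contradicting the second formulation. So the first formulation follows from the second \emph{plus} this co-r.e.\ observation, and your write-up should record that extra step rather than treating the two statements as interchangeable under your closing ``dictionary'', which as written conflates ``finite residual not recursive'' with ``finite residual not recursively enumerable''.
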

This theorem then directly implies Theorem \ref{thm:THM2}. 
\begin{proof}
[Proof of Theorem \ref{thm:THM2}] A group which is finitely presented
as a residually finite group must have CFQ, by Proposition \ref{prop: same quotients}.
Thus if such a group has unsolvable word problem, it satisfies the
criteria required by Theorem \ref{thm:THM2}. A group, finitely presented
as a residually finite group, with unsolvable word problem exists
by Theorem \ref{thm: Slobodskoi}. 
\end{proof}

\subsection{Dyson's Groups }

These groups are amalgamated products of two lamplighter groups. 

The lamplighter group $L$ is the wreath product $\mathbb{Z}\wr\mathbb{Z}/2\mathbb{Z}$
of $\mathbb{Z}$ and of the order two group. It admits the following
presentation:
\[
\langle a,\varepsilon\vert\,\varepsilon^{2},\,\left[\varepsilon,a^{i}\varepsilon a^{-i}\right],i\in\mathbb{Z}\rangle
\]
The element $a^{i}\varepsilon a^{-i}$ of $L$ corresponds to the
element of $\underset{\mathbb{Z}}{\bigoplus}\mathbb{Z}/2\mathbb{Z}$
with only one non-zero coordinate in position $i\in\mathbb{Z}$. We
call it $u_{i}$. Consider another copy $\hat{L}$ of the lamplighter
group, together with an isomorphism from $L$ to $\hat{L}$ we note
$g\mapsto\hat{g}$. For each subset $\mathcal{A}$ of $\mathbb{Z}$,
define $L(\mathcal{A})$ to be the amalgamated product of $L$ and
$\hat{L}$, with $u_{i}=a^{i}\varepsilon a^{-i}$ identified with
$\hat{u}_{i}=\hat{a}^{i}\hat{\varepsilon}\hat{a}^{-i}$ for each $i$
in $\mathcal{A}$. It has the following presentation:
\[
\langle a,\hat{a},\varepsilon,\hat{\varepsilon}\vert\,\varepsilon^{2},\,\hat{\varepsilon}^{2},\,\left[\varepsilon,a^{i}\varepsilon a^{-i}\right],\left[\hat{\varepsilon},\hat{a}^{i}\hat{\varepsilon}\hat{a}^{-i}\right],i\in\mathbb{Z},\,a^{j}\varepsilon a^{-j}=\hat{a}{}^{j}\hat{\varepsilon}\hat{a}{}^{-j},\,j\in\mathcal{A}\rangle
\]

Recall that the \emph{double} of a group $G$ over a subgroup $H$
is the free product of two copies of $G$ amalgamated along the identity
over $H$, denote it $G*_{H}G$. Dyson's group associated to $\mathcal{A}$
is thus the double of the Lamplighter group $L$ over the subgroup
generated by elements of the form $a^{i}\varepsilon a^{-i}$, for
$i$ in $\mathcal{A}$. Call $H_{\mathcal{A}}$ this subgroup. 

For $n$ a non-zero natural number, denote $L(\mathcal{A})_{n}$ the
group $\langle L(\mathcal{A})\,\vert\,a^{n},\hat{a}^{n}\rangle$. 

Call $\mathcal{A}\,mod\,n$ the set $\left\{ r\in\left\{ 0,...,n-1\right\} ,\exists a\in\mathcal{A},a\equiv r\,mod\,n\right\} $. 
\begin{lem}
\label{lem:-L(A)n-fpquotient}$L(\mathcal{A})_{n}$ \textup{is finitely
presented and residually finite. It is the amalgamated product of
two copies of the finite} \emph{wreath product $\mathbb{Z}/n\mathbb{Z}\wr\mathbb{Z}/2\mathbb{Z}$,
and it admits the following presentation:}
\end{lem}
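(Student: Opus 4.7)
The plan is to verify the three claims in order: (i) produce the claimed finite presentation, (ii) identify it as the amalgamated product of two copies of $\mathbb{Z}/n\mathbb{Z}\wr\mathbb{Z}/2\mathbb{Z}$, and (iii) deduce residual finiteness.

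First, I would start from the given presentation of $L(\mathcal{A})$ and impose the relations $a^n = 1$ and $\hat{a}^n = 1$. The crucial observation is that, under these relations, the element $a^i \varepsilon a^{-i}$ depends only on $i \bmod n$, and similarly for $\hat{a}^i \hat{\varepsilon} \hat{a}^{-i}$. Consequently the infinite family of commutator relations $[\varepsilon, a^i\varepsilon a^{-i}]$ reduces to the finite family indexed by $i \in \{1,\dots,n-1\}$, and likewise for the hatted generators. The amalgamation relations $a^j\varepsilon a^{-j} = \hat{a}^j\hat{\varepsilon}\hat{a}^{-j}$ for $j \in \mathcal{A}$ collapse to the finitely many relations indexed by $j$ ranging over $\mathcal{A}\bmod n$. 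This directly yields the finite presentation (which I would write down explicitly), and shows in particular that $L(\mathcal{A})_n$ is finitely presented.

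Second, I would identify each half of the presentation. The subpresentation on $\{a,\varepsilon\}$ reduced by $a^n$ is exactly the standard presentation of $\mathbb{Z}/n\mathbb{Z}\wr\mathbb{Z}/2\mathbb{Z}$: the base group is generated by the conjugates $u_i = a^i\varepsilon a^{-i}$ (now indexed by $i \in \mathbb{Z}/n\mathbb{Z}$) which are involutions commuting with $\varepsilon = u_0$, hence pairwise commuting (conjugating by powers of $a$), and $a$ cyclically permutes them. A standard argument (or a direct Reidemeister–Schreier / normal-form verification) shows no further collapse occurs, so this factor is indeed the finite wreath product of order $n\cdot 2^n$; the hatted copy is identical. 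The remaining relations express precisely the amalgamation of these two finite wreath products along the subgroup generated by $\{u_j : j \in \mathcal{A}\bmod n\}$, which is the image of $H_{\mathcal{A}}$ in each factor. This identifies $L(\mathcal{A})_n$ as a free product of two finite groups with amalgamation along a common (finite) subgroup.

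Third, residual finiteness follows from the classical fact that any amalgamated free product of two finite groups over a common subgroup is residually finite; indeed such groups are virtually free by Bass--Serre theory (the Bass--Serre tree has finite vertex stabilizers, so a torsion-free finite-index subgroup acts freely on it), and free groups, hence virtually free groups, are residually finite.

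The main obstacle is the middle step: one must check that each factor presentation really does give the finite wreath product without accidental extra collapse, so that the amalgamation is along the \emph{expected} subgroup. This is essentially a matter of verifying that the Reidemeister--Schreier rewriting yields precisely the abelian base $(\mathbb{Z}/2\mathbb{Z})^n$, which is routine but is the one place where care is needed before reading off the presentation and invoking the residual finiteness of amalgams of finite groups.
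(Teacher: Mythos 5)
Your proposal is correct and follows essentially the same route as the paper's proof: impose $a^n,\hat a^n$, simplify the presentation to the finite one, recognize it as an amalgam of two copies of the finite wreath product $\mathbb{Z}/n\mathbb{Z}\wr\mathbb{Z}/2\mathbb{Z}$, and invoke the classical residual finiteness of amalgamated products of finite groups (the paper cites Magnus--Karrass--Solitar where you supply the Bass--Serre argument). Your extra care about the factors not collapsing further and about the amalgamation being along the expected subgroup only fills in details the paper leaves implicit.
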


\begin{align*}
\langle a,\hat{a},\varepsilon,\hat{\varepsilon}\vert\,a^{n},\,\hat{a}^{n},\,\varepsilon^{2},\,\hat{\varepsilon}^{2},\,\left[\varepsilon,a^{i}\varepsilon a^{-i}\right],\left[\hat{\varepsilon},\hat{a}^{i}\hat{\varepsilon}\hat{a}^{-i}\right],0\leq i\leq n-1,\,\,\,\,\,\,\,\,\,\,\\
\,a^{j}\varepsilon a^{-j}=\hat{a}{}^{j}\hat{\varepsilon}\hat{a}{}^{-j},\,j\in\mathcal{A}\,mod\,n\rangle
\end{align*}

\begin{proof}
The given presentation is obtained from the presentation of $L(\mathcal{A})$,
adding relations $a^{n}$ and $\hat{a}{}^{n}$, and simplifying the
relations as can be done. It then follows from that presentation that
$L(\mathcal{A})_{n}$ is an amalgamated product of two copies of the
finite wreath product $\mathbb{Z}/n\mathbb{Z}\wr\mathbb{Z}/2\mathbb{Z}$.
Finally it is well known that an amalgamated product of finite groups
is residually finite (see \cite{Magnus1969}). 
\end{proof}
The properties of the group $L(\mathcal{A})$ are stated using the
profinite topology on $\mathbb{Z}$. A basis of open sets for $\mathcal{PT}(\mathbb{Z})$
consists in sets of the form $n+p\mathbb{Z}$, for $n$ and $p$ integers.
Thus a subset $\mathcal{A}$ of $\mathbb{Z}$ is open in $\mathcal{PT}(\mathbb{Z})$
if and only if for every $n$ in $\mathcal{A}$ there exists an integer
$p$ such that $n+p\mathbb{Z}\subseteq\mathcal{A}$. 

We can now state the properties of the group $L(\mathcal{A})$ that
are relevant to this work. 
\begin{prop}
\label{prop:Main-prop-1-2-3}Let $\mathcal{A}$ be a subset of $\mathbb{Z}$. 
\begin{enumerate}
\item $L(\mathcal{A})$ is re, co-re or has solvable word problem if and
only if $\mathcal{A}$ is respectively re, co-re or recursive.
\item $L(\mathcal{A})$ is residually finite if and only if $\mathcal{A}$
is closed in $\mathcal{PT}(\mathbb{Z})$. 
\item $L(\mathcal{A})$ has CFQ if and only if the function which to $n$
associates $\mathcal{A}\,mod\,n$ is recursive. 
\end{enumerate}
\end{prop}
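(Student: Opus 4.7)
The plan is to reduce all three parts to a single structural observation about the amalgamated product $L(\mathcal{A}) = L *_{H_{\mathcal{A}}} \hat{L}$. I introduce the \emph{witness elements} $w_j := u_j \hat{u}_j^{-1}$ for $j \in \mathbb{Z}$, where $u_j = a^j \varepsilon a^{-j}$ and $\hat{u}_j = \hat{a}^j \hat{\varepsilon} \hat{a}^{-j}$. Since the abelian subgroups $\langle u_i : i \in \mathbb{Z}\rangle$ and $\langle \hat{u}_i : i \in \mathbb{Z}\rangle$ are free on their generators inside the two lamplighter factors, the normal form theorem for amalgamated products gives that $w_j = 1$ in $L(\mathcal{A})$ iff $j \in \mathcal{A}$, and that $w_j = 1$ in $L(\mathcal{A})_n$ iff $(j \bmod n) \in \mathcal{A} \bmod n$. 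These witnesses will serve as the backbone of all three equivalences.

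For part (1), the forward implications follow by applying the given re/co-re/word-problem algorithm for $L(\mathcal{A})$ to the computable family $(w_j)_{j \in \mathbb{Z}}$ to read off the corresponding property of $\mathcal{A}$. For the backward implications: if $\mathcal{A}$ is re, the defining presentation of $L(\mathcal{A})$ is recursive; if $\mathcal{A}$ is recursive, the amalgamated normal form of a word in $L(\mathcal{A})$ can be effectively computed using the solvable word problem of the lamplighter group together with the decision procedure for $\mathcal{A}$; the co-re case is analogous, combined with Proposition \ref{prop:re=00003D>coREFQ}. For part (2), the key observation is that every finite quotient of $L(\mathcal{A})$ factors through some $L(\mathcal{A})_n$ (since $a$ and $\hat{a}$ must have finite order), and each $L(\mathcal{A})_n$ is itself residually finite by Lemma \ref{lem:-L(A)n-fpquotient}. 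Thus residual finiteness of $L(\mathcal{A})$ is equivalent to injectivity of the natural map $L(\mathcal{A}) \to \prod_n L(\mathcal{A})_n$. For $w_j$ with $j \notin \mathcal{A}$, survival in some $L(\mathcal{A})_n$ is equivalent to the existence of $n$ with $(j + n\mathbb{Z}) \cap \mathcal{A} = \varnothing$, which is precisely the condition that $\mathcal{A}$ be closed at $j$ in $\mathcal{PT}(\mathbb{Z})$; a normal form argument extends separation from witnesses to arbitrary elements.

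For part (3), I use Proposition \ref{prop: same quotients} to reduce to the case where $\mathcal{A}$ is closed in $\mathcal{PT}(\mathbb{Z})$. For the backward direction, given a marked finite group $(F, f)$, compute $n := \mathrm{lcm}(\mathrm{ord}(f(a)), \mathrm{ord}(f(\hat{a})))$; then $(F, f)$ is a quotient of $L(\mathcal{A})$ iff it is a quotient of $L(\mathcal{A})_n$, and the latter is decidable once $\mathcal{A} \bmod n$ is known, since Lemma \ref{lem:-L(A)n-fpquotient} then yields an explicit finite presentation of $L(\mathcal{A})_n$. For the forward direction, to compute $\mathcal{A} \bmod n$ it suffices to decide, for each $j \in \{0, \ldots, n-1\}$, whether the finite group $F_{n,j} := L(\{0, \ldots, n-1\}\setminus\{j\})_n$ is a quotient of $L(\mathcal{A})$: by construction $F_{n,j}$ is such a quotient iff $\mathcal{A} \bmod n \subseteq \{0, \ldots, n-1\}\setminus\{j\}$, iff $j \notin \mathcal{A} \bmod n$, and CFQ decides this. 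The main obstacle throughout is controlling non-witness elements of $L(\mathcal{A})$ in parts (1) and (2) via the normal form theorem for amalgamated products; once this technical step is in place, the rest reduces to routine manipulation of presentations and finite-quotient arguments.
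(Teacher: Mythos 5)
There are two genuine gaps, one in each of the hard directions of (1) and (3). In part (1), the implication ``$\mathcal{A}$ co-re $\Rightarrow L(\mathcal{A})$ co-re'' is not ``analogous'' to the other cases, and Proposition \ref{prop:re=00003D>coREFQ} is about finite quotients, not the word problem, so it cannot help here. Your method for the recursive case --- compute the amalgamated normal form --- is unavailable when $\mathcal{A}$ is merely co-re: reducing a word to normal form requires deciding whether a syllable lies in the amalgamated subgroup $H_{\mathcal{A}}$, i.e.\ deciding membership in $\mathcal{A}$, and co-enumerability only semi-decides the negative. This is the most delicate point of (1); the paper handles it by enumerating all normal-form words $w$ (possible because the transversal condition refers to the re complement of $\mathcal{A}$) and, for each such $w$, enumerating the words $w_{1}$ trivial in the recursively presented over-quotient $L(\mathbb{Z}\setminus B_{w})$, where $B_{w}$ is the finite set of indices occurring in $w$; since $w$ remains in normal form in $L(\mathbb{Z}\setminus B_{w})$, the products $ww_{1}$ exhaust exactly the words nontrivial in $L(\mathcal{A})$. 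Some argument of this kind is indispensable.

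In part (3), the forward direction as you wrote it fails because $F_{n,j}=L(\{0,\dots,n-1\}\setminus\{j\})_{n}$ is not a finite group: it is an amalgamated product of two copies of the finite group $\mathbb{Z}/n\mathbb{Z}\wr\mathbb{Z}/2\mathbb{Z}$ over a subgroup of index $2n$ in each factor, hence infinite for every $n$ and $j$. CFQ only decides whether a given \emph{finite} marked group is a quotient, so it cannot answer the query ``is $F_{n,j}$ a quotient of $L(\mathcal{A})$?''. Your equivalence ``$F_{n,j}$ is a quotient iff $j\notin\mathcal{A}\,mod\,n$'' is correct, and via residual finiteness of $F_{n,j}$ it yields a semi-decision procedure for $j\notin\mathcal{A}\,mod\,n$ (search for a finite quotient of $L(\mathcal{A})$ in which $a^{n}=\hat{a}^{n}=e$ and $u_{j}\neq\hat{u}_{j}$); but the complementary condition $j\in\mathcal{A}\,mod\,n$ is a statement quantified over \emph{all} finite quotients, with no a priori bound on when to stop searching, so no decision procedure results. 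The paper closes exactly this gap by running in parallel the quotient list of $L(\mathcal{A})_{n}$ (which has CFQ by Proposition \ref{prop add relations}) against the $2^{n}$ candidate lists given by the presentations $\prod_{B}$, and eliminating the $2^{n}-1$ wrong candidates in finite time. Part (2) and the backward direction of (3) are essentially the paper's own arguments and are fine as sketches.
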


The first two points of this proposition were proven in \cite{Dyson1974}.
We still include proofs for those two statements. The proof given
for (1) is exactly that of Dyson. We then give two proofs of (2),
one shorter and more conceptual than that of \cite{Dyson1974}, which
was suggested to us by the anonymous referee, and another one which,
although not as clear as the previous one, can readily be rendered
effective, and in the proof of Proposition \ref{prop:Last Prop},
we refer ourselves to this second proof. 

Notice finally that the condition which appears in (3) could be stated
as: $\mathcal{A}$ is determinable in finite quotients of $\mathbb{Z}$
(see Definition \ref{def:DETERMINABLE in finite qutients}). 
\begin{proof}
We prove all three points in order. 

If $L(\mathcal{A})$ is re or co-re, then clearly so is $\mathcal{A}$,
as $n$ belongs to $\mathcal{A}$ if and only if $u_{n}=\hat{u}{}_{n}$
in $L(\mathcal{A})$, which proves one direction of (1).

It is clear that if $\mathcal{A}$ is re, the presentation of $L(\mathcal{A})$
given above is re as well. 

Suppose now that $\mathcal{A}$ is co-re. We can enumerate the complement
of $\mathcal{A}$, and thus enumerate elements of the form:
\[
(*)\,\,\,\,w=a^{\alpha_{1}}x_{1}\hat{a}{}^{\beta_{1}}y_{1}...a^{\alpha_{k}}x_{k}\hat{a}{}^{\beta_{k}}y_{k}z
\]
where $\alpha_{i},\,\beta_{i}\in\mathbb{Z}$, $x_{1}$, ..., $x_{k}$
and $y_{1}$, ..., $y_{k}$ are elements of the base groups of $L$
and $\hat{L}$ that have null components corresponding to indices
in $\mathcal{A}$, and $z$ is any element in $L$ or in $\hat{L}$.
The elements written this way are exactly the elements in normal form
for the amalgamated product $L(\mathcal{A})$. Recall that the normal
form in an amalgamated product (\cite{Lyndon2001}) necessitates two
choices of transversals, here one for $L/(\underset{\mathbb{\mathcal{A}}}{\bigoplus}\mathbb{Z}/2\mathbb{Z})$
and one for $\hat{L}/(\underset{\mathbb{\mathcal{A}}}{\bigoplus}\mathbb{Z}/2\mathbb{Z})$,
and that an element in normal form is a consecutive product of an
element of one transversal, followed by one of the other, etc, terminated
by any element of one of these groups. But elements $a^{\alpha}x$,
with $\alpha$ in $\mathbb{Z}$ and $x$ in the base group with null
components on $\mathcal{A}$, indeed form the most natural transversal
for $L/(\underset{\mathbb{\mathcal{A}}}{\bigoplus}\mathbb{Z}/2\mathbb{Z})$.
Thus any non-trivial element $g$ of $L(\mathcal{A})$ is equal to
exactly one element in this enumeration. Ideally, we would then enumerate
words that give the identity in $L(\mathcal{A})$, and listing words
that can be obtained concatenating a word in normal form to a word
that defines the identity would give the desired enumeration. Since
$L(\mathcal{A})$ is not supposed re, we cannot directly enumerate
this set of trivial words, but we will over-approximate it by a re
set. For $w$ as in $(*)$, note $B_{w}$ the set of all indices that
appear in elements $x_{i}$ or $y_{i}$ of the base groups ($B_{w}$
can be empty). The over approximation consists of the words corresponding
to the identity in $L(\mathbb{Z}\setminus B_{w})$. Note that $w$
is also in normal form in $L(\mathbb{Z}\setminus B_{w})$, thus non-trivial
there. Of course, $B_{w}$ is finite, thus $\mathbb{Z}\setminus B_{w}$
is re, thus as we already remarked, we can enumerate words (in $a$,
$\hat{a}$, $\varepsilon$, $\hat{\varepsilon}$) that correspond
to the identity in $L(\mathbb{Z}\setminus B_{w})$. Then for any such
word $w_{1}$, the product $ww_{1}$ corresponds to a non-identity
element of $L(\mathbb{Z}\setminus B_{w})$, thus to a non-identity
element in $L(\mathcal{A})$, since, as $\mathcal{A\subseteq}(\mathbb{Z}\setminus B_{w})$,
$L(\mathbb{Z}\setminus B_{w})$ satisfies more relations than $L(\mathcal{A})$.
Thus enumerating products $ww_{1}$ with $w_{1}=e$ in $L(\mathbb{Z}\setminus B_{w})$
will only yield non-identity elements in $L(\mathcal{A})$. What's
more, every element of the form $ww_{2}$, where $w_{2}$ is a word
that is the identity in $L(\mathcal{A})$, will arise this way, again
because $L(\mathbb{Z}\setminus B_{w})$ satisfies more relations than
$L(\mathcal{A})$. 

Because the algorithm that enumerates relations in $L(\mathbb{Z}\setminus B_{w})$
depends recursively on $w$, this process can be applied simultaneously
to all words $w$ in normal form, giving an enumeration of all words
that correspond to non-identity elements of $L(\mathcal{A})$, thus
proving that $L(\mathcal{A})$ is co-re. 

Finally $\mathcal{A}$ is recursive if and only if it is both re and
co-re, if and only if $L(\mathcal{A})$ has solvable word problem.

This ends the proof of (1). 

We first sketch a conceptually simple proof of (2). 

Recall that the group $L(\mathcal{A})$ is the double of the lamplighter
group $L$ over the subgroup $H_{\mathcal{A}}$ generated by elements
of the form $a^{i}\varepsilon a^{-i}$, for $i$ in $\mathcal{A}$.
The statement (2) is implied by the two following arguments:
\begin{itemize}
\item the double of a residually finite group $G$ over a subgroup $H$
is itself residually finite if and only if $H$ is separable in $G$;
\item the subgroup $H_{\mathcal{A}}$ is separable in $L$ if and only if
$\mathcal{A}$ is separable in $\mathbb{Z}$. 
\end{itemize}
Those arguments are easy to check, and because a second proof for
(2) follows, we do not detail them. 

We now give another proof of (2), which can be easily rendered effective.
We in fact prove slightly more than (2): for a subset $\mathcal{A}$
of $\mathbb{Z}$, the finitary image $L(\mathcal{A})_{f}$ of $L(\mathcal{A})$
is $L(\overline{\mathcal{A}})$, where $\overline{\mathcal{A}}$ denotes
the closure of $\mathcal{A}$ in $\mathcal{PT}(\mathbb{Z})$. First
we show that if $n$ belongs to $\overline{\mathcal{A}}$, then in
any finite quotient $(F,f)$ of $L(\mathcal{A})$, the images of $u_{n}$
and $\hat{u}{}_{n}$ are the same in $F$. Let $(F,f)$ be some finite
quotient of $L(\mathcal{A})$. Call $p$ and $p'$ the orders of $f(a)$
and $f(\hat{a})$ in $F$. Then, because $n$ belongs to $\overline{\mathcal{A}}$,
$n+pp'\mathbb{Z}$ must meet with $\mathcal{A}$, as it is a neighborhood
of $n$. Thus we have $k$ such that $n+pp'k\in\mathcal{A}$, that
is, such that $u_{n+pp'k}=\hat{u}{}_{n+pp'k}$ in $L(\mathcal{A})$.
Then, in $F$ (we omit to write the homomorphism onto $F$): 
\begin{align*}
u_{n} & =(a^{p})^{p'k}u_{n}(a{}^{p})^{-p'k}=a^{pp'k}u_{n}a^{-pp'k}\\
 & =u_{n+pp'k}\\
 & =\hat{u}{}_{n+pp'k}\\
 & =(\hat{a}{}^{p'})^{pk}\hat{u}{}_{n}(\hat{a}{}^{p'})^{-pk}=\hat{u}{}_{n}
\end{align*}
This shows that $L(\mathcal{A})_{f}$ is a quotient of $L(\overline{\mathcal{A}})$.
It is then sufficient to see that $L(\overline{\mathcal{A}})$ is
residually finite to see that $L(\mathcal{A})_{f}=L(\overline{\mathcal{A}})$. 

We suppose that $\mathcal{A}$ is closed to omit the closure notation.
Let $w$ be a non-identity element of $L(\mathcal{A})$, and write
it in normal form $w=a^{\alpha_{1}}x_{1}\hat{a}{}^{\beta_{1}}y_{1}...a^{\alpha_{k}}x_{k}\hat{a}{}^{\beta_{k}}y_{k}z$,
as in the proof of (1). 

Suppose first that the normal form is the trivial one: $w=z$ with
$z$ in $L$ or in $\hat{L}$. Then $w$ is non-trivial in the quotient
of $L(\mathcal{A})$ obtained by identifying the two copies $L$ and
$\hat{L}$ of the lamplighter group (i.e. $\langle L(\mathcal{A})\,\vert\,a=\hat{a},\varepsilon=\hat{\varepsilon}\rangle$
), which is just the lamplighter group itself, which is residually
finite.

We can now suppose the normal form has several terms. Each $x_{i}$
is an element of $\underset{\mathbb{Z}\setminus\mathcal{A}}{\bigoplus}\mathbb{Z}/2\mathbb{Z}$,
that is to say a product $\prod u_{k_{i,j}}$ with $k_{i,j}\notin\mathcal{A}$.
Because $\mathcal{A}$ is closed, for each such $k_{i,j}$ there is
$p_{i,j}$ that satisfies $(k_{i,j}+p_{i,j}\mathbb{Z})\cap\mathcal{A}=\varnothing$.
Similarly, for each $\hat{x}_{i}$, introduce integers $p'_{i,j}$,
$j=1,2,...$. Call $N$ the product $\prod p_{i,j}\prod p'_{i,j}$.
(It is $1$ if, for all $i$, $x_{i}=\hat{x}{}_{i}=e$.) We claim
that $w$ is non-trivial in $L(\mathcal{A})_{N}$. Indeed, $N$ was
chosen so that for any $(i,j)$, $k_{i,j}$ (or its remainder modulo
$N$) does not belong to $\mathcal{A}\,mod\,N$. This implies that
$w$ is also in normal form in $L(\mathcal{A})_{N}$ (by Lemma \ref{lem:-L(A)n-fpquotient}),
and thus non-trivial there. 

Again by Lemma \ref{lem:-L(A)n-fpquotient}, $L(\mathcal{A})_{N}$
is residually finite, so we've proven that $L(\mathcal{A})$ is residually
residually finite, which of course is the same as residually finite. 

Finally we prove (3). 

Suppose $\mathcal{A}\,mod\,n$ depends recursively of $n$. Let $(F,f)$
be a finite group together with a function $f$ from $\left\{ a,\,\hat{a},\,\varepsilon,\thinspace\hat{\varepsilon}\right\} $
to $F$. To determine whether $f$ defines a homomorphism, compute
the orders of $f(a)$ and of $f(\hat{a})$, and let $n$ be their
product. If $f$ extends to a morphism, this morphism factors through
the projection $\pi:L(\mathcal{A})\rightarrow L(\mathcal{A})_{n}$.
By Lemma \ref{lem:-L(A)n-fpquotient}, a finite presentation for $L(\mathcal{A})_{n}$
can be found from the computation of $\mathcal{A}\,mod\,n$. It can
then be determined in finite time from this presentation whether $f$
defines a homomorphism from $L(\mathcal{A})_{n}$ to $F$. 

Suppose now that $L(\mathcal{A})$ has CFQ. Let $n$ be a natural
number. To compute $\mathcal{A}\,mod\,n$, consider all possible presentations
for $L(\mathcal{A})_{n}$: for $B\subset\left\{ 0,...,n-1\right\} $,
define the presentation $\prod_{B}$: 
\begin{align*}
\langle a,\hat{a},\varepsilon,\hat{\varepsilon}\vert\,a^{n},\,\hat{a}^{n},\,\varepsilon^{2},\,\hat{\varepsilon}^{2},\,\left[\varepsilon,a^{i}\varepsilon a^{-i}\right],\left[\hat{\varepsilon},\hat{a}^{i}\hat{\varepsilon}\hat{a}^{-i}\right],0\leq i\leq n-1,\,\,\,\,\,\,\,\,\,\,\\
\,a^{j}\varepsilon a^{-j}=\hat{a}{}^{j}\hat{\varepsilon}\hat{a}{}^{-j},\,j\in B\rangle
\end{align*}
All these presentations define residually finite groups, and because
they are finitely presented, they have CFQ. $L(\mathcal{A})_{n}$
also has CFQ, because it is obtained from $L(\mathcal{A})$ by adding
two relations, thus we can start to enumerate the quotients of $L(\mathcal{A})_{n}$.
Also start enumerating the quotients of all groups given by the presentations
$\prod_{B}$, for $B\subset\left\{ 0,...,n-1\right\} $. Those $2^{n}$
lists are all different (because, as the presentations $\prod_{B}$
give residually finite groups, a list contains a finite group in which
the images of $a^{j}\varepsilon a^{-j}$ and $\hat{a}{}^{j}\hat{\varepsilon}\hat{a}{}^{-j}$
differ if and only if $j$ does not belong to $B$), and only one
corresponds to the list of quotients of $L(\mathcal{A})_{n}$. It
can be determined, in a finite number of steps, which of those lists
corresponds to $L(\mathcal{A})_{n}$, and thus which presentation
$\prod_{B}$ gives a presentation of $L(\mathcal{A})_{n}$, and then
one can conclude that $B=\mathcal{A}\,mod\,n$. 
\end{proof}
From Proposition \ref{prop:Main-prop-1-2-3}, to prove Theorem \ref{thm:Thm1},
it suffices to build $\mathcal{A}$ with the following properties:
$\mathcal{A}$ is closed in $\mathcal{PT}(\mathbb{Z})$, $\mathcal{A}$
is recursive, there is no algorithm that takes $n$ as input and computes
$\mathcal{A}\,mod\,n$. Similarly, to prove Theorem \ref{thm:THM2},
it suffices to build $\mathcal{A}$ such that: $\mathcal{A}$ is closed
in $\mathcal{PT}(\mathbb{Z})$, $\mathcal{A}$ is not recursive, but
there is an algorithm that, given $n$ as input, computes $\mathcal{A}\,mod\,n$.

\subsection{Building subsets of $\mathbb{Z}$ with prescribed properties}

We first give an alternative proof of Theorem \ref{thm:THM2}, before
completing the proof of Theorem \ref{thm:Thm1}. 
\begin{lem}
\label{lemma for thm 2}There exists a non-recursive subset $\mathcal{A}$
of $\mathbb{Z}$, closed in $\mathcal{PT}(\mathbb{Z})$, for which
$\mathcal{A}\,mod\,n$ depends recursively of $n$. 
\end{lem}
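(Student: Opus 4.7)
My plan is to exploit the correspondence between closed subsets $\mathcal{A}$ of $\mathbb{Z}$ in $\mathcal{PT}(\mathbb{Z})$ and compatible systems $(C_n)_{n \geq 1}$ of residue sets $C_n \subseteq \mathbb{Z}/n\mathbb{Z}$, identifying $C_n = \mathcal{A} \bmod n$; here compatibility means $\pi_{n,m}(C_n) = C_m$ whenever $m \mid n$, and $\mathcal{A} = \bigcap_n \pi_n^{-1}(C_n)$. Under this correspondence, ``$\mathcal{A}\,\bmod\,n$ depends recursively on $n$'' becomes exactly uniform computability of the family $(C_n)_n$; and crucially, such uniform computability makes $\mathcal{A}$ automatically co-re, since ``$x \in \mathcal{A}$'' unfolds to the $\Pi_1$-predicate ``$\forall n, x \bmod n \in C_n$''. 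Hence the task reduces to constructing a uniformly computable compatible family $(C_n)$ whose associated set $\mathcal{A}$ is not re.

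I would build such a family by a finite-injury priority argument diagonalizing against every re subset $W_e$ of $\mathbb{Z}$. For each index $e$, I assign a witness integer $y_e$ and a ``fresh'' prime $p_e$, chosen larger than all primes used by earlier requirements and growing fast enough (say $p_e \ge 2^{e}$) that $p_e$ exceeds every $n$ at which $(C_n)$ has been queried so far. I then simulate $W_e$ in the background: as soon as $y_e$ enters $W_e$, I ``kill'' $y_e$ by removing the residue $y_e \bmod p_e$ from $C_{p_e}$, which by compatibility propagates to every $C_{p_e m}$ and so excludes $y_e$ from $\mathcal{A}$. When selecting $y_{e''}$ for a later requirement $R_{e''}$, I pick it avoiding $y_e \bmod p_{e''}$ for every earlier $e$; since $p_{e''} > e'' + 1$ there are ample residues available, so the future killing cannot inadvertently evict any older witness. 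Because distinct requirements use distinct primes, no previously defined $C_n$ is ever revised, and in fact no injury ever occurs.

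Uniform computability of $(C_n)$ then follows from the formula $C_n = \{r \in \mathbb{Z}/n\mathbb{Z} : r \not\equiv y_e \pmod{p_e} \text{ for every killed } e \text{ with } p_e \mid n\}$, which involves only the $e$ with $p_e \le n$ (at most $\log_2 n$ of them, by the growth of $p_e$); for each such $e$ one simulates $W_e$ far enough to detect a killing if one has occurred by the relevant stage, and any later killing uses a prime exceeding $n$ and so does not affect $C_n$. Compatibility $\pi_{n,m}(C_n) = C_m$ is immediate from the per-prime structure. For the diagonalization: if $y_e$ eventually enters $W_e$, then $y_e \in W_e \setminus \mathcal{A}$; if not, then $y_e \in \mathcal{A} \setminus W_e$; in either case $\mathcal{A} \ne W_e$, so $\mathcal{A}$ is not re. The main obstacle is the bookkeeping that guarantees uniform computability of $(C_n)$ in the presence of requirements whose ``killing'' events are only re-detectable; this is handled by the fresh-prime and witness-avoidance conventions above, which localize each requirement's action and make the construction a sequence of independent, permanent commitments.
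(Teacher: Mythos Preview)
Your approach is genuinely different from the paper's: the paper builds $\mathcal{A}$ explicitly as the complement of a union $\bigcup_n (y_{f(n)}+x_n\mathbb{Z})$ for carefully chosen sequences $(x_n),(y_n)$ and a single recursive $f$ with r.e.\ non-recursive image, whereas you attempt a priority-style diagonalization against all r.e.\ sets. A correct version of your argument would be a perfectly valid alternative, but as written there are two real gaps.

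\textbf{The main gap is in the computability of $C_n$.} You assign the prime $p_e$ to requirement $R_e$ in advance (``say $p_e\ge 2^e$'') and then define $C_n$ by excluding $y_e\bmod p_e$ for each \emph{killed} $e$ with $p_e\mid n$. But ``killed'' means $y_e\in W_e$, and that is not decidable. Your sentence ``any later killing uses a prime exceeding $n$'' is false under this setup: the prime attached to $R_e$ is fixed from the start and does not depend on \emph{when} (or whether) $y_e$ enters $W_e$. So for a fixed $n$ you cannot bound the number of simulation steps needed to determine $C_n$; you would have to solve the halting problem for each $e$ with $p_e\mid n$. The fix is to assign $p_e$ \emph{only at the moment $y_e$ enters $W_e$}, choosing it at that stage $s$ to be a fresh prime larger than $s$ and larger than $|y_e-y_{e'}|$ for every witness $y_{e'}$ already defined. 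Then $C_n$ really is determined by stage $n$ of the construction, and your computability claim goes through. (Your witness-avoidance clause must be reformulated accordingly: when choosing $y_e$ at stage $e$, avoid $y_{e'}\pmod{p_{e'}}$ only for those $e'$ already killed; later killings are harmless because their primes are chosen large.)

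\textbf{A second gap is the identification $\mathcal{A}\bmod n=C_n$.} The inverse-limit correspondence you invoke gives closed subsets of $\hat{\mathbb{Z}}$, not of $\mathbb{Z}$; for a compatible system $(C_n)$ one has only $\mathcal{A}\bmod n\subseteq C_n$ in general, and equality can fail (take $C_n=\{\alpha\bmod n\}$ for $\alpha\in\hat{\mathbb{Z}}\setminus\mathbb{Z}$). In your construction $\mathcal{A}=\mathbb{Z}\setminus\bigcup_{e\text{ killed}}(y_e+p_e\mathbb{Z})$, and to get $\mathcal{A}\bmod n=C_n$ you must show that every residue $r\in C_n$ is hit by some integer avoiding \emph{all} the removed progressions, including those with $p_e\nmid n$. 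A density argument works provided $\sum_{e\text{ killed}}1/p_e<1$; this is easy to arrange (choose the $k$-th prime used to exceed $2^{k+1}$, say), but it must be said.
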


Note that without the closeness assumption, this result would be a
lot easier: it is precisely the result of Proposition \ref{prop:Determinable not re},
whose proof is very short, and yields a set that can be neither re
nor co-re. However, for a closed set $\mathcal{A}$, the computation
of $\mathcal{A}\,mod\,n$ will yield an enumeration of the complement
of $\mathcal{A}$: indeed, if $a$ is not in $\mathcal{A}$, some
open set $a+b\mathbb{Z}$ must not meet $\mathcal{A}$, and thus $a$
is not in $\mathcal{A}\,mod\,b$. This proves that if $\mathcal{A}$
is closed, and if $\mathcal{A}\,mod\,n$ is computable, then $\mathcal{A}$
is co-re. This is just the translation for Dyson's groups of: if $G$
is residually finite, and has CFQ, then $G$ is co-re. 
\begin{proof}
We construct a set $\mathcal{B}$, which will be the complement of
the announced $\mathcal{A}$. Thus it has to be open, re but not co-re,
and for any $a$ and $b$, the question ``is $a+b\mathbb{Z}$ a subset
of $\mathcal{B}$'' has to be solvable in a finite number of steps.
Indeed, $a$ belongs to $\mathcal{A}\,mod\,b$ if and only if $a+b\mathbb{Z}$
meets $\mathcal{A}$, if and only if $a+b\mathbb{Z}$ is not a subset
of the complement of $\mathcal{A}$. 

Call $p_{n}$ the $n$-th prime number. Define two sequences $(x_{n})_{n\geq0}$
and $(y_{n})_{n\geq1}$ by the following: 
\begin{align*}
x_{0} & =1\\
x_{n} & =p_{n}x_{n-1}^{2}\\
y_{n} & =x_{n-1}
\end{align*}

These sequences have the following properties: 
\begin{itemize}
\item for any $n$, $x_{n}\vert x_{n+1}$ and $y_{n}\vert y_{n+1}$. 
\item for any integer $b$, there is some (computable) $n$ such that $b\vert x_{n}$
and $b\vert y_{n}$. 
\item $p_{k}$ divides $x_{n}$ if and only if $k\geq n$, and $p_{k}$
divides $y_{n}$ if and only if $k>n$. 
\item for integers $k$, $k'$, $n$, $n'$, with $k\leq n$ and $k'\leq n'$,
$y_{k}+x_{n}\mathbb{Z}$ and $y_{k'}+x_{n'}\mathbb{Z}$ are disjoint
if and only if $k\neq k'$, and otherwise one is a subset of the other. 
\end{itemize}
All these are clear, the fourth point follows from the third, by remarking
that elements of $y_{k}+x_{n}\mathbb{Z}$ are all multiples of $p_{0}$,
$p_{1}$,... $p_{k-1}$, but none of them is a multiple of $p_{k}$. 

Consider a recursive function $f$ whose image is re but not co-re.
Assume that for any $n$, $1\le f(n)\leq n$ (it is easy to see that
such a function exists). Then we define $\mathcal{B}$ as the union:
\[
\mathcal{B}=\underset{n\in\mathbb{N}^{*}}{\bigcup}y_{f(n)}+x_{n}\mathbb{Z}
\]
Since $f$ is a recursive function, $\mathcal{B}$ is re. It is not
co-re, however, because $y_{m}$ belongs to $\mathcal{B}$ if and
only if $m$ belongs to the image of $f$ (this follows directly from
the properties of the sequences $(x_{n})_{n\geq0}$ and $(y_{n})_{n\geq1}$).

$\mathcal{B}$ is an open set, because it is defined as an union of
open sets. 

All that is left to see is that we can decide, for $a$ and $b$ integers,
whether $a+b\mathbb{Z}$ is a subset of $\mathcal{B}$. Suppose that
$a<b$. If $a=0$, then $0\in a+b\mathbb{Z}$, but $0\notin\mathcal{B}$,
thus $a+b\mathbb{Z}$ is not a subset of $\mathcal{B}$. If $a$ is
non-zero, no element of $a+b\mathbb{Z}$ is divisible by $b$. Thus,
because of the second property of the sequences $(x_{n})_{n\geq0}$
and $(y_{n})_{n\geq1}$ quoted above, there exists $N$ such that
if $N\leq k\leq n$, then $a+b\mathbb{Z}\cap y_{k}+x_{n}\mathbb{Z}=\varnothing$.
Thus $a+b\mathbb{Z}$ is a subset of $\mathcal{B}$ if and only if
it is a subset of the set $\mathcal{B}_{N}$, defined by:
\[
\mathcal{B}_{N}=\underset{n\in\mathbb{N}^{*},\,f(n)\leq N}{\bigcup}y_{f(n)}+x_{n}\mathbb{Z}
\]
Define a pseudo-inverse $g$ of $f$ by $g(m)=\inf\left\{ n,\,f(n)=m\right\} $.
Because we chose $f$ such that for any $n$, $f(n)\leq n$, for any
$m$, $g(m)\geq m$. If $m$ is not in the image of $f$, put $g(m)=\infty$.
The set $\mathcal{B}_{N}$ can  be expressed as the disjoint union:
\[
\mathcal{B}_{N}=\underset{k\in Im(f),\,k\leq N}{\bigcup}y_{k}+x_{g(k)}\mathbb{Z}
\]
Because $x_{k}\vert x_{g(k)}$, $\mathcal{B}_{N}$ is contained in
the set $\mathcal{C}_{N}$, defined by: 
\[
\mathcal{C}_{N}=\underset{k\leq N}{\bigcup}y_{k}+x_{k}\mathbb{Z}
\]
It can be determined whether $a+b\mathbb{Z}$ is contained in $\mathcal{C}_{N}$,
because the sequences $(x_{n})_{n\geq1}$ and $(y_{n})_{n\geq1}$
can be computed. If $a+b\mathbb{Z}$ is not contained in $\mathcal{C}_{N}$,
then it is not contained in $\mathcal{B}_{N}$ either.

If it is contained in $\mathcal{C}_{N}$, $a+b\mathbb{Z}$ is contained
in $\mathcal{B}_{N}$ if and only if, for each $k$, $a+b\mathbb{Z}\cap y_{k}+x_{k}\mathbb{Z}$
is contained in $\mathcal{B}_{N}$. But, because $\mathcal{B}_{N}$
and $\mathcal{C}_{N}$ are disjoint unions, $a+b\mathbb{Z}\cap y_{k}+x_{k}\mathbb{Z}$
is contained in $\mathcal{B}_{N}$ if and only if it is contained
in $y_{k}+x_{g(k)}\mathbb{Z}$. (If $k$ is not in $Im(f)$, $g(k)=\infty$,
by convention $y_{k}+x_{g(k)}\mathbb{Z}=\left\{ y_{k}\right\} $.)
Now this question can be effectively answered. If $a+b\mathbb{Z}\cap y_{k}+x_{k}\mathbb{Z}$
is empty, there is nothing to do. Otherwise, it is of the form $t+\text{lcm}(b,\,x_{k})\mathbb{Z}$.
Enumerate $f(1)$, $f(2)$,..., $f(\text{lcm}(b,x_{k}))$. Either
$k$ is in that list, in which case $g(k)$ can be computed and the
question ``is $a+b\mathbb{Z}\cap y_{k}+x_{k}\mathbb{Z}$ contained
in $y_{k}+x_{g(k)}\mathbb{Z}$'' can be settled, or $k$ does not
appear in the enumeration, which shows that $g(k)$ is greater than
$\text{lcm}(b,\,x_{k})$. It this last case, as $x_{g(k)}$ is greater
than $g(k)$, $y_{k}+x_{g(k)}\mathbb{Z}$ cannot contain any set of
the form $t+\text{lcm}(b,\,x_{k})\mathbb{Z}$. 
\end{proof}
This Lemma allows us to give another proof of Theorem \ref{thm:THM2}: 
\begin{proof}
[Proof of Theorem \ref{thm:THM2}, alternate] It follows from Proposition
\ref{prop:Main-prop-1-2-3} that the group $L(\mathcal{A})$, where
$\mathcal{A}$ is the set constructed in Lemma \ref{lemma for thm 2},
satisfies the requirements of Theorem \ref{thm:THM2}. 
\end{proof}
This group has a depth function which cannot be smaller than a recursive
function. We now prove the last lemma which ends the proof of Theorem
\ref{thm:Thm1}:
\begin{lem}
\label{lemma for thm 1}There exists a recursive subset $\mathcal{A}$
of $\mathbb{Z}$, closed in $\mathcal{PT}(\mathbb{Z})$, for which
$\mathcal{A}\,mod\,n$ does not depend recursively of $n$. 
\end{lem}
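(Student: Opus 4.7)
The plan is to construct $\mathcal{A}$ as a specific subset of $S := \{2^k 3^s : k, s \geq 0\}$, the set of $3$-smooth positive integers, so that membership in $\mathcal{A}$ records the halting of Turing machines while $\mathcal{A}$ inherits closedness from $S$. Let $W \colon \mathbb{N}^2 \to \{0,1\}$ be a recursive function such that $K := \{k : \exists s, \, W(k,s) = 1\}$ is the halting set, and set
\[
\mathcal{A} := \{2^k 3^s : k, s \geq 0, \, W(k,s) = 1\}.
\]
Recursiveness of $\mathcal{A}$ is immediate: given $x \in \mathbb{Z}$, we check that $x > 0$, factor it, and if $x = 2^k 3^s$ for the unique $k,s \geq 0$ we decide $W(k,s) = 1$; otherwise $x \notin \mathcal{A}$.

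To show $\mathcal{A}$ is closed in $\mathcal{PT}(\mathbb{Z})$, I would first verify that $S$ is closed. For $\ell \notin S$: if $\ell$ has a prime divisor $p \geq 5$, or if $\ell = 0$ (take $p = 5$), then the progression $\ell + p\mathbb{Z}$ avoids $S$, since every element of $S$ is coprime to $p$; if $\ell = -2^a 3^b$ is a negative $3$-smooth number, take $n = 2^{a+3}$ and note that an element $\ell + 2^{a+3} m = 2^a(-3^b + 8m) \in S$ would force $3^{s'} = -3^b + 8m$ for some $s' \geq 0$, which is impossible since $3^{s'} \equiv 1$ or $3 \pmod 8$ while $-3^b \equiv 5$ or $7 \pmod 8$. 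Then, for $\ell = 2^{k_0} 3^{s_0} \in S \setminus \mathcal{A}$ (so $W(k_0, s_0) = 0$), take $n := 2^{k_0+1}3^{s_0+1}$: every element of $\ell + n\mathbb{Z}$ has the form $\ell(1 + 6m)$, and since $1 + 6m$ is coprime to $6$, it is $3$-smooth only when $1 + 6m = 1$, i.e.\ $m = 0$. Thus $(\ell + n\mathbb{Z}) \cap S = \{\ell\}$, and since $\mathcal{A} \subseteq S$ and $\ell \notin \mathcal{A}$, this progression avoids $\mathcal{A}$.

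For the failure of $\mathcal{A}\,\text{mod}\,n$ to be uniformly recursive in $n$, consider the residue class $V_k := 2^k + 2^{k+1}\mathbb{Z}$, which consists exactly of the integers with $2$-adic valuation equal to $k$. Intersecting with $S$ yields $\{2^k 3^s : s \geq 0\}$, so $V_k \cap \mathcal{A} = \{2^k 3^s : W(k,s) = 1\}$, which is non-empty if and only if $k \in K$. Therefore $2^k \in \mathcal{A} \,\text{mod}\, 2^{k+1}$ if and only if $k \in K$, and a uniform procedure computing $\mathcal{A} \,\text{mod}\, n$ from $n$ would decide halting.

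The main obstacle is establishing closedness of $S$ itself, which reduces to the modular calculations above; once this is granted, closedness of $\mathcal{A}$ within $S$ follows at once from the elementary observation that $1 + 6\mathbb{Z}$ contains no $3$-smooth integer besides $1$, which isolates each $\ell \in S \setminus \mathcal{A}$ from the rest of $S$.
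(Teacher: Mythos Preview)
Your proof is correct. Both you and the paper follow the same broad strategy---encode the halting problem into a recursive set of integers lying inside a ``thin'' set whose complement is easily seen to be open---but the concrete encodings differ. The paper assigns to each Turing machine $M_n$ its own pair of primes $p_{2n},p_{2n+1}$: powers of $p_{2n}$ record the steps of the computation, and a single power of $p_{2n+1}$ witnesses halting; the thin ambient set is the set of prime powers, and the undecidable question is whether $0\in\mathcal{A}\bmod p_{2n+1}$. You instead work entirely inside the $3$-smooth numbers $S=\{2^k3^s\}$, using the exponent of $2$ to index the machine and the exponent of $3$ to index the step via a Kleene-style predicate $W$; the undecidable question becomes whether $2^k\in\mathcal{A}\bmod 2^{k+1}$.

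Your version is more economical (two primes instead of infinitely many) and the isolation argument $(\ell+6\ell\mathbb{Z})\cap S=\{\ell\}$ is particularly clean. The paper's version has the mild advantage that distinct machines live on arithmetically disjoint primes, so the closedness case analysis is slightly shorter (no need for the $\bmod\ 8$ computation to handle negative $3$-smooth numbers). Both constructions are effectively closed, which is what the paper needs for the subsequent corollary on recursive depth functions; your construction shares this feature, since the progressions you exhibit are computed explicitly from $\ell$.
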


\begin{proof}
Call $p_{n}$ the $n$-th prime number. Fix some effective enumeration
$M_{1}$, $M_{2}$,... of all Turing machines. Consider the following
process: start running simultaneously all those machines, as is done
to show that the halting problem is re. While running calculations
on the $n$-th machine, at each new step in the computation, produce
a new power of $p_{2n}$: $p_{2n}$, $p_{2n}^{2}$, $p_{2n}^{3}$...
If the computation on this machine stops after $k$ steps, end the
list $p_{2n}$, $p_{2n}^{2}$, ..., $p_{2n}^{k}$ already produced
with $p_{2n+1}^{k+1}$. 

Call $\mathcal{A}$ the set of all powers of prime numbers obtained
this way. $\mathcal{A}$ is obviously re, as it was defined by an
effective enumeration process. It is even recursive. Indeed, for a
number $x$, if $x$ is not the power of a prime, then $x$ is not
in $\mathcal{A}$. If it is the power of a prime of even index, say
$x=p_{2n}^{k}$, then $x$ belongs to $\mathcal{A}$ if and only if
the $n$-th Turing machine does not stop in less than $k$ calculations
steps. This question can be effectively settled. Similarly, if $x$
is the power of a prime of odd index, $x=p_{2n+1}^{k}$, then $x$
belongs to $\mathcal{A}$ if and only if the $n$-th Turing machine
stops in exactly $k$ calculations steps, this also can be determined. 

Of course, $\mathcal{A}\,mod\,m$ does not depend recursively of $m$.
Indeed, the question: ``does $0$ belong to $\mathcal{A}\,mod\,p_{2n+1}$?''
is, by construction, equivalent to ``does the $n$-th Turing machine
halt?''.

Finally, we show that $\mathcal{A}$ is a closed set, which is equivalent
to finding, for any $x$ not in $\mathcal{A}$, a number $y$ such
that $x+y\mathbb{Z}$ does not meet $\mathcal{A}$. If $x$ has several
prime divisors, then $x+x\mathbb{Z}$ works, because any element of
it has several prime divisors. If $x$ is the power of a prime of
even index, $x=p_{2n}^{k}$, and $x$ is not in $\mathcal{A}$, it
must be that the $n$-th Turing machine stops in strictly less than
$k$ steps. Thus the only elements in $\mathcal{A}$ that are multiples
of $p_{2n}$ will have a valuation in $p_{2n}$ lower than $k$. Thus
$x+x\mathbb{Z}$ will also work. The last case is if $x$ is the power
of a prime of odd index, $x=p_{2n+1}^{k}$. In this case, we claim
that $x+p_{2n+1}x\mathbb{Z}$ does not meet $\mathcal{A}$. Indeed,
$x$ is the only power of $p_{2n+1}$ contained in $x+p_{2n+1}x\mathbb{Z}$,
all other elements of it have at least two different prime divisors. 
\end{proof}
With this we end the proof of Theorem \ref{thm:Thm1}: 
\begin{proof}
[Proof of Theorem \ref{thm:Thm1}]By Proposition \ref{prop:Main-prop-1-2-3},
for a subset $\mathcal{A}$ of $\mathbb{Z}$, Dyson's group $L(\mathcal{A})$
satisfies the requirements of Theorem \ref{thm:Thm1} provided that
$\mathcal{A}$ is closed, recursive, and that the function $\mathcal{A}\,mod\,n$
is not computable. Lemma \ref{lemma for thm 1} provides such a set. 
\end{proof}
We finally remark that an upper bound to the depth function of the
obtained group $L(\mathcal{A})$ can be effectively computed, and
that the group obtained in Theorem \ref{thm:Thm1} can be supposed
to have recursive depth function. 
\begin{cor}
\label{prop:Last Prop}There exists a group with solvable word problem
and recursive depth function, that does not have CFQ. 
\end{cor}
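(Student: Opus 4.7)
The plan is to verify that the group $G=L(\mathcal{A})$ already produced in the proof of Theorem \ref{thm:Thm1}, with $\mathcal{A}$ the set constructed in Lemma \ref{lemma for thm 1}, satisfies the stronger conclusion, so that no new construction is needed.

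By Proposition \ref{prop:Main-prop-1-2-3} and Theorem \ref{thm:Thm1}, $G$ is residually finite, has solvable word problem, and does not have CFQ. It remains to exhibit a recursive function bounding $\rho_{S}(n)$ from above, where $S=\{a,\hat{a},\varepsilon,\hat{\varepsilon}\}$. For this I would revisit the second proof of part (2) of Proposition \ref{prop:Main-prop-1-2-3}: given a non-identity $w\in L(\mathcal{A})$ written in the amalgamated product normal form $a^{\alpha_{1}}x_{1}\hat{a}^{\beta_{1}}y_{1}\cdots a^{\alpha_{k}}x_{k}\hat{a}^{\beta_{k}}y_{k}z$, with each $x_{i},y_{i}$ supported outside $\mathcal{A}$, that proof constructs $N:=\prod p_{i,j}\prod p'_{i,j}$, where each $p_{i,j}$ (resp.\ $p'_{i,j}$) satisfies $(k_{i,j}+p_{i,j}\mathbb{Z})\cap\mathcal{A}=\varnothing$ for the index $k_{i,j}$ occurring in $x_{i}$ (resp.\ $y_{i}$); the element $w$ is then non-trivial in the finite residually finite quotient $L(\mathcal{A})_{N}$, whose cardinality is a recursive function of $N$ by Lemma \ref{lem:-L(A)n-fpquotient}.

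The crucial observation is that for the specific $\mathcal{A}$ of Lemma \ref{lemma for thm 1}, the case analysis carried out there to prove closure of $\mathcal{A}$ is uniformly effective in $k\notin\mathcal{A}$: if $k$ has two or more distinct prime divisors one takes $p(k):=k$; if $k=p_{2m}^{j}$ with $k\notin\mathcal{A}$ one again takes $p(k):=k$; and if $k=p_{2m+1}^{j}$ one takes $p(k):=p_{2m+1}\cdot k$. Deciding which case applies is algorithmic (factor $k$; if $k$ is a prime power, identify the prime and use that $\mathcal{A}$ is recursive to test $k\in\mathcal{A}$), so $p$ is computable on the complement of $\mathcal{A}$. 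For a word $w$ of length at most $n$, its amalgamated product normal form can be computed (the solvable word problem together with recursiveness of $\mathcal{A}$ allow one to recognize the amalgamating subgroup and choose transversals), and the indices $k_{i,j},k'_{i,j}$ appearing in it all have absolute value at most $n$. Hence $N$, and a fortiori $|L(\mathcal{A})_{N}|$, is bounded by a recursive function of $n$; maximizing over the finitely many length-$\leq n$ non-trivial words yields the required recursive upper bound for $\rho_{S}(n)$.

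The only substantive step is the extraction from the proof of Lemma \ref{lemma for thm 1} of the uniform computability of $p(k)$; this is a direct inspection but must be carried out carefully because the recipe for $p(k)$ is produced case by case. Once it is recorded, the corollary follows by combining this effective version with Theorem \ref{thm:Thm1} and the quantitative content of part (2) of Proposition \ref{prop:Main-prop-1-2-3}.
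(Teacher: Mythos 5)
Your argument has two genuine gaps. First, $L(\mathcal{A})_{N}$ is not a finite group: by Lemma \ref{lem:-L(A)n-fpquotient} it is the amalgamated product of two copies of the finite group $\mathbb{Z}/N\mathbb{Z}\wr\mathbb{Z}/2\mathbb{Z}$ over a proper subgroup, hence infinite (it is only finitely presented and residually finite). So knowing that $w\neq e$ in $L(\mathcal{A})_{N}$ does not by itself bound the order of a finite quotient of $L(\mathcal{A})$ detecting $w$; you still need a computable bound on the depth of $w$ inside $L(\mathcal{A})_{N}$. This is delicate precisely because you cannot compute $\mathcal{A}\,mod\,N$, hence cannot write down a presentation of $L(\mathcal{A})_{N}$; the paper circumvents this by listing all $2^{N}$ candidate presentations $\prod_{B}$, each of which is finitely presented and residually finite, hence has a uniformly computable depth function, and taking the supremum over all of them.

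Second, even after this repair you only obtain a recursive \emph{upper bound} for $\rho_{S}$, whereas the corollary, in the terminology fixed in the remark following Proposition \ref{Prop depth dehn}, asks for $\rho_{S}$ itself to be recursive. These are not the same thing: computing $\rho_{S}(n)$ exactly requires deciding which finite marked groups of each order are quotients of $L(\mathcal{A})$, and that is exactly the failure of CFQ you are exhibiting. The paper explicitly concedes that it is unclear whether the depth function of $L(\mathcal{A})$ is recursive, and adds a final step that your proposal is missing: take the direct product of $L(\mathcal{A})$ with a finitely presented residually finite group, supplied by \cite{Kharlampovich2017}, whose recursive depth function dominates the computed upper bound. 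The product still has solvable word problem, still fails CFQ (otherwise its quotient $L(\mathcal{A})$ would have CFQ by Proposition \ref{prop add relations}), and now has genuinely recursive depth function.
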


\begin{proof}
It appears clearly in the proof of Lemma \ref{lemma for thm 1} that
the constructed $\mathcal{A}$ is \emph{effectively closed}: if $x$
does not belong to it, then some $y$ such that $x+y\mathbb{Z}$ does
not meet $\mathcal{A}$ can effectively be found. Going back to the
proof of the first point of Proposition \ref{prop:Main-prop-1-2-3},
it appears that, given a non-identity element $w$ of $L(\mathcal{A})$,
the recursiveness of $\mathcal{A}$ permits to compute its normal
form. Then, in the proof of the second point of that same proposition,
it appears that, from this normal form and the effective closeness
of $\mathcal{A}$, some integer $N$ can be effectively found, such
that $w$ is non trivial in $L(\mathcal{A})_{N}$. A presentation
of $L(\mathcal{A})_{N}$ cannot necessarily be found, but there are
$2^{N}$ possible finite presentations for it, all of them with recursive
depth function. Taking the supremum of those depth functions allows
to compute a recursive upper bound to the depth function of $L(\mathcal{A})$. 

Although it is not clear whether that depth function is recursive
or not, by taking the direct product of the group $L(\mathcal{A})$
with a finitely presented group with recursive depth function greater
that that of $L(\mathcal{A})$, which exists by \cite{Kharlampovich2017},
one obtains a group which still has solvable word problem and uncomputable
finite quotients, and whose depth function is recursive. 
\end{proof}
In \cite{Rauzy2020}, we construct, also using Dyson's groups, a residually
finite group $G$ with solvable word problem, that not only does not
have CFQ, but that also is not \emph{effectively residually finite}:
there can be no algorithm that, given a non-trivial element $w$,
gives a finite quotient $(F,f)$ in which the image of $w$ is non-trivial.
This is done by constructing a closed subset $\mathcal{A}$ of $\mathbb{Z}$
that is not effectively closed. 

\bibliographystyle{abbrv}
\bibliography{CFQrefs}

\end{document}